\let\pa\partial
\let\na\nabla
\newcommand{\N}{{\mathbb N}}
\newcommand{\R}{{\mathbb R}}
\newcommand{\diver}{\operatorname{div}}
\newtheorem{theorem}{Theorem}
\newtheorem{lemma}[theorem]{Lemma}
\newtheorem{remark}[theorem]{Remark}
\begin{document}

\title[Existence analysis of a coupled memristor--electric network system]{Existence analysis of a three-species memristor \\ drift-diffusion system coupled to electric networks} 

\author[A. J\"ungel]{Ansgar J\"ungel}
\address{Institute of Analysis and Scientific Computing, TU Wien, Wiedner Hauptstra\ss e 8--10, 1040 Wien, Austria}
\email{juengel@tuwien.ac.at} 

\author[\selectlanguage{vietnamese}T. T. Nguyễn]{\selectlanguage{vietnamese}Tuấn Tùng Nguyễn}
\address{Institute of Analysis and Scientific Computing, TU Wien, Wiedner Hauptstra\ss e 8--10, 1040 Wien, Austria}
\email{tuan-tung.nguyen@asc.tuwien.ac.at}

\selectlanguage{english}
\date{\today}

\thanks{The authors acknowledge partial support from   
the Austrian Science Fund (FWF), grant 10.55776/F65, and from the Austrian Federal Ministry for Women, Science and Research and implemented by \"OAD, project MultHeFlo. This work has received funding from the European Research Council (ERC) under the European Union's Horizon 2020 research and innovation programme, ERC Advanced Grant NEUROMORPH, no.~101018153. For open-access purposes, the authors have applied a CC BY public copyright license to any author-accepted manuscript version arising from this submission.} 

\begin{abstract}
The existence of global weak solutions to a partial-differential-algebraic system is proved. The system consists of the drift-diffusion equations for the electron, hole, and oxide vacancy densities in a memristor device, the Poisson equation for the electric potential, and the differential-algebraic equations for an electric network. The memristor device is modeled by a two-dimensional bounded domain, and mixed Dirichlet--Neumann boundary conditions for the electron and hole densities as well as the potential are imposed. The coupling is realized via the total current through the memristor terminal and the network node potentials at the terminals. The network equations are decomposed in a differential and an algebraic part. The existence proof is based on the Leray--Schauder fixed-point theorem, a priori estimates coming from the free energy inequality, and a logarithmic-type Gagliardo--Nirenberg inequality. It is shown, under suitable assumptions, that the solutions are bounded and strictly positive.
\end{abstract}

\keywords{Partial-differential-algebraic equations, memristors, electric networks, existence of weak solutions, strict positivity.}  
 
\subjclass[2000]{34A09, 35K51, 35K55, 35Q81.}

\maketitle


\section{Introduction}

Neuromorphic computing is a brain-inspired approach to developing hardware and algorithms that can make artificial neural networks more energy-efficient. A promising device as technology enabler of neuromorphic computing is the memristor. It is a two-terminal nonlinear resistor with memory showing a resistive switching behavior, which mimicks biological synaptic functions in artificial neural networks \cite{IeAm20}. Memristors are often part of an electric network, containing resistors, capacitors, and inductors; see Figure \ref{fig.device}. In this paper, the memristor device is modeled by three-species drift-diffusion equations, self-consistently coupled to the Poisson equation for the electric potential, and the electric network is described by differential-algebraic equations. In the literature, only two-species drift-diffusion equations have been considered with electric networks, either for steady states \cite{ABG10,ABGT03,ABR09,AlRo10} or transient states \cite{ABG05,AlRo21}. The extension to more than two species is nontrivial, since monotonicity properties of the drift terms cannot be used anymore. This issue is overcome by higher-order elliptic regularity, estimates from the free energy, and a logarithmic-type Gagliardo--Nirenberg inequality. The main aim of this paper is to prove the existence of global weak solutions in two space dimensions.

\begin{figure}[ht]
\includegraphics[width=80mm]{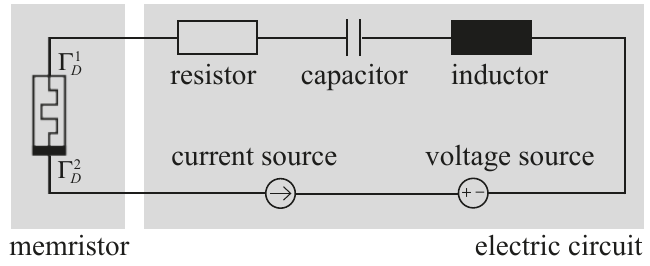}
\caption{Illustration of a simple coupled device--circuit model, consisting of a memristor and an electric circuit with resistor, capacitor, and inductor. The terminals of the memristor are denoted by $\Gamma_D^1$ and $\Gamma_D^2$.}
\label{fig.device}
\end{figure}

\subsection{Model equations}

The dynamics in semiconductor devices is usually governed by the electrons in the conduction band and by the holes (electron vacancies) in the valence band of the semiconductor lattice. In metal--oxide memristors, the oxide vacancies (missing oxygen atoms) play a crucial role. The movement of oxygen vacancies under an electric field enables the memristor to switch between high and low resistance states, which is the basis for the memristor functionality. Therefore, the charge carriers are negatively charged electrons, positively charged holes, and positively charged oxide vacancies. 

We model the memristor device by the domain $\Omega\subset\R^2$ and assume that the carrier flow can be described by the (scaled) drift--diffusion equations for the electron density $n$, hole density $p$, density of the oxide vacancies $D$, and the electric potential $V$ \cite{GSTD13,SBW09}:
\begin{align}
  & \pa_t n = \diver J_n, \quad \pa_t p = -\diver J_p, \quad
  \pa_t D = -\diver J_D, \label{1.dd1} \\
  & J_n = \na n-n\na V, \quad J_p = -(\na p+p\na V), \quad
  J_D = -(\na D+D\na D), \label{1.dd2} \\
  & \lambda^2\Delta V = n-p-D+A(x)\quad\mbox{in }\Omega,\ t>0,
  \label{1.V}
\end{align}
where $J_n$, $J_p$, and $J_D$ are the current densities of the electrons, holes, and oxide vacancies, respectively. The boundary of the memristor device consists of two terminals $\Gamma_D^1$ and $\Gamma_D^2$ with $\Gamma_D^1\cup\Gamma_D^2=\Gamma_D$ and the insulating boundary parts $\Gamma_N$ such that $\pa\Omega = \overline{\Gamma}_D\cup \overline{\Gamma}_N$. It is straightforward to generalize the model to more than two terminals.  Following \cite{SBW09}, we neglect recombination--generation terms. We impose initial and mixed Dirichlet--Neumann boundary conditions:
\begin{align}
  n(\cdot,0) = n^0, \quad p(\cdot,0) = p^0, \quad D(\cdot,0)=D^0
  &\quad\mbox{in }\Omega, \label{1.ddic} \\
  n = \bar{n}, \quad p = \bar{p}, \quad V = \bar{V}
  &\quad\mbox{on }\Gamma_D,\ t>0, \label{1.dbc} \\
  J_n\cdot\nu = J_p\cdot\nu = \na V\cdot\nu = 0
  &\quad\mbox{on }\Gamma_N,\ t>0, \label{1.nbc} \\
  J_D\cdot\nu = 0 &\quad\mbox{on }\pa\Omega,\ t>0. \label{1.bcD}
\end{align}
The doping profile $A(x)$ and the initial and Dirichlet boundary data (except $\bar{V}$) are given, and $\lambda>0$ is the scaled Debye length. The boundary potential $\bar{V}$ is a function of the node potentials at the terminals, defining the applied bias. The Dirichlet conditions for the densities are obtained under the
assumptions of charge neutrality and thermal equilibrium \cite[Sec.~5.1]{Sel84} or from the first-order approximation of the Boltzmann semiconductor equation in the diffusion limit. Since the oxide vacancies are supposed not to leave the domain, we impose no-flux boundary conditions for $D$ on the whole boundary. These boundary conditions are usually used in the literature \cite{GSTD13,SBW09}. The mismatch of the boundary conditions for $n$, $p$ and for $D$ is one of the main mathematical difficulties.

\subsection{Network equations}

An electric network typically contains capacitors, inductors, and resistors. Capacitors store electric energy in an electric field and are used, for instance, to filter noise in a circuit, while inductors store electric energy in a magnetic field and oppose changes in the current flow. Resistors regulate the flow of the electric current and protect the circuit by limiting the current. The electric network considered here connects $n_C$ linear capacitors, $n_L$ inductors, and $n_R$ resistors as well as $n_I$ independent current and $n_V$ voltage sources. The network has $m$ nodes plus the ground node with zero potential. The variables are the node potentials $u(t)\in\R^m$, the currents $i_L(t)\in\R^{n_L}$ through inductors, the currents $i_V(t)\in\R^{n_V}$ through voltage sources, the independent current sources $i_I(t)\in\R^{n_I}$, and the independent voltage sources $v_V(t)\in\R^{n_V}$. The node potentials $u\in\R^m$ are related to the node potentials $u_D\in\R^2$ at the two terminals by the selection matrix $S=(S_{ij})\in\R^{m\times 2}$ via $u_D=S^Tu$. Here, the matrix $S$ relates the two device contacts to the $m$ network nodes and is defined by
\begin{align}\label{1.S}
  S_{ij} = \begin{cases}
  1 &\mbox{if the terminal }\Gamma_D^j\mbox{ touches the node }i, \\
  0 & \mbox{else}.
  \end{cases}
\end{align}

The topology of the network is described by the so-called incidence matrix that represents the graph of the electric circuit. Ordering the branches of the graph appropriately, we can decompose the incidence matrix into block form with the (reduced) incidence matrices $A_k\in\R^{m\times n_k}$, describing the branch-node relationships for capacitors, inductors, and resistors ($k=C,L,R$) and the relationships for voltage and current sources ($k=I,V$). The matrices $C\in\R^{n_C\times n_C}$, $L\in\R^{n_L\times n_L}$, and $R\in\R^{n_R\times n_R}$ denote the capacitance, inductance, and conductance matrices, respectively. In our simplified setting, these matrices are diagonal with the capacitance, inductance, and conductance values on the diagonals. In modified nodal analysis \cite{BaGu18}, the network equations can be formulated as
\begin{align}
  & A_CCA_C^T\frac{du}{dt} + A_RRA_R^Tu + A_Li_L + A_Vi_V 
  + I_M[J;u] = A_Ii_I,
  \label{1.dae1} \\
  & L\frac{di_L}{dt} = A_L^Tu, \quad -A_V^Tu = v_V, \label{1.dae2}
\end{align}
with the initial conditions
\begin{align}
  u(0) = u^0, \quad i_L(0) = i_L^0, \quad i_V(0) = i_V^0. \label{1.ic}
\end{align}
The current $I_M[J;u]\in\R^2$ represents the coupling with the memristor device and depends on the total current $J:=J_n+J_p+J_D$ and the node potentials $u$; see Section \ref{sec.coupl} for details. 

Equation \eqref{1.dae1} is a consequence of Kirchhoff's current law, according to which the sum of all inflowing branch currents is zero. The first term in \eqref{1.dae1} is the capacitor current, the second term is the resistor current according to Ohm's law, and the third and fourth terms are the currents through inductors and voltage sources, respectively. The first equation in \eqref{1.dae2} reflects the voltage-current relationship for inductors. The second equation in \eqref{1.dae2} follows from Kirchhoff's voltage law, according to which the sum of all branch voltages in a loop is zero. System \eqref{1.dae1}--\eqref{1.dae2} contains differential equations for $u$ and $i_L$ and additional algebraic equations, forming a system of differential-algebraic equations. The full model is a partial differential-algebraic system.

\subsection{Coupling conditions}\label{sec.coupl} 

We have two couplings, the device-to-circuit coupling and the circuit-to-device coupling. The total current through the terminal $\Gamma_D^j$ is given by
\begin{align*}
  I_D^j[J;u] = -\int_{\Gamma_D^j}(J-\lambda^2\pa_t\na V)\cdot\nu ds,
  \quad j=1,2,
\end{align*}
which is the sum of the total particle current and the so-called displacement current. It depends on the node potentials $u=(u_1,\ldots,u_m)$ via the Dirichlet boundary conditions for $V$. We show in Section \ref{sec.model} that $I_D^j$ can be formulated as a weighted sum of $du_j/dt$ and an integral over $\Omega$, depending on the total current $J$ and $u$; see \eqref{2.IDj}. The current $I_M[J;u]$ resulting from the device operation, and defined in \eqref{2.IM}, is an input in the circuit equations.

The node potentials on the terminals define the boundary potential
\begin{align*}
  \bar{V} = V_{\rm bi} + S^Tu\quad\mbox{on }\Gamma_D, 
\end{align*}
where the so-called built-in potential $V_{\rm bi}$ is the potential corresponding to the equilibrium densities and reads as $V_{\rm bi}=\operatorname{arcsinh}(A/(2n_i))$, where $n_i$ is the intrinsic density, being the product of the equilibrium electron and hole densities \cite{GSTD13}. As explained before, the node potential $u_D=S^Tu$ represents the applied bias. The full model is illustrated in Figure \ref{fig.coupled}.

\begin{figure}[ht]
\includegraphics[width=140mm]{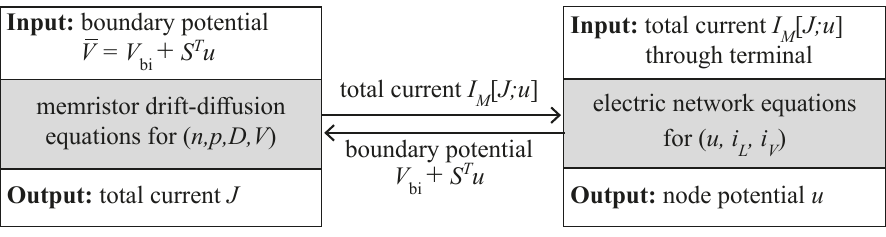}
\caption{Coupled drift--diffusion and electric network equations. The selection matrix $S$ is introduced in \eqref{1.S} and $I_M[J;u]$ is defined in \eqref{2.IM}.}
\label{fig.coupled}
\end{figure} 


\subsection{State of the art}

The semiconductor drift-diffusion equations for two species (electrons and holes) have been analyzed intensively since the 1980s; see, e.g., \cite{Gaj85,MRS90}. Fewer results are available for equations with more than two species. We refer, for instance, to \cite{BFS14,GlHu05} for an analysis in two space dimensions and to \cite{BFPR14} for a study in any space dimensions, assuming no-flux boundary conditions for the charge carriers and a Robin boundary condition for the electric potential. The memristor dynamics can be described by three-species drift--diffusion equations. The existence of weak solutions was proved in \cite{JJZ23}. The result was extended to the degenerate drift-diffusion equations in \cite{JuVe24} and to the Fermi--Dirac drift--diffusion model in \cite{HJP25}. 

Electric network models, coupled to two-species semiconductor equations, were analyzed in a series of papers \cite{ABG05,ABG10,ABGT03,ABR09,AlRo10}, but mostly in one space dimension or for steady states. The existence of solutions to the multidimensional drift-diffusion equations was shown in \cite{AlRo21}, but the proof contains a flaw; see Remark \ref{rem.mistake}. Thus, the existence for coupled circuit--device models in several space dimensions is an open problem, even for two species. We fill this gap in this paper for two-dimensional devices.

The complexity of the coupled partial-differential-algebraic equations (PDAEs) is measured by its index. Common indices are the differentiation index, which refers to the minimum number of differentiations required to obtain an explicit representation of the highest-order derivative, and the tractability index, which is based on a matrix chain construction. The tractability index for device--circuit PDAEs was analyzed in \cite{AlRo12,BoTi07}, and conditions were stated under which this index is one. For a review of PDAEs, we refer to \cite{BaGu18}. 

Electric circuits can also be coupled to the semiconductor energy-transport equations, which take into account the electron temperature \cite{AlCa07,BrJu08,BrJu11}. Since the analysis of the semiconductor equations is delicate and the electron temperature does not influence the lattice temperature significantly \cite{BrJu11}, we assume that the electron (and lattice) temperature is constant.

The novelty of this paper is the first proof of a global weak solution to the coupled circuit--device system for three species (including the case of two species).


\subsection{Assumptions}

We use the following notation. Let $M_k\in\R^{m\times n_k}$ for $k=1,\ldots,\ell$. We write $x\in \operatorname{ker}(M_1,\ldots,M_\ell)^T\subset\R^m$ if and only if $x\in\operatorname{ker}M_k^T$ for $k=1,\ldots,\ell$. Set $\Omega_T=\Omega\times(0,T)$ for $T>0$ and
\begin{align*}
  H_D^1(\Omega) = \big\{u\in H^1(\Omega):u=0\mbox{ on }\Gamma_D\big\},
  \quad H_D^{-1}(\Omega) = H_D^1(\Omega)'.
\end{align*}
We impose the following assumptions.
\begin{itemize}
\item[(A1)] Domain: $\Omega\subset\R^2$ is a bounded domain with Lipschitz boundary $\pa\Omega=\overline{\Gamma}_D\cup\overline{\Gamma}_N$ such that $\Gamma_N$ is relatively open and $\Gamma_D=\Gamma_D^1\cup\Gamma_D^2$. The terminals $\Gamma_D^1$ and $\Gamma_D^2$ are disjoint, relatively open, and have positive measure.
\item[(A2)] Data: $\lambda>0$, $A\in L^\infty(\Omega)$, and $i_I$, $v_V\in C^0([0,T])$. 
\item[(A3)] Initial data: $n^0$, $p^0$, $D^0\in L^2(\Omega)$ are nonnegative.
\item[(A4)] Boundary data: $\bar{n}$, $\bar{p}\in H^2(\Omega)\cap W^{1,\infty}(\Omega)$ are strictly positive and $V_{\rm bi}\in H^2(\Omega)\cap W^{1,\infty}(\Omega)$.
\item[(A5)] Index-1 topological conditions: $\operatorname{ker}(S,A_C,A_R,A_V)^T=\{0\}$ and $\operatorname{ker}Q_{CS}^TA_V=\{0\}$, where $Q_{CS}$ is the projection on $\operatorname{ker}(A_C,S)^T$.
\item[(A6)] Consistency: The initial data $(u^0,i_L^0,i_V^0)^T\in\R^{m+n_L+n_V}$ is consistent in the sense of \eqref{2.consist}.
\item[(A7)] Higher-order regularity: Let $f\in L^2(\Omega_T)$, $u^0$, $g\in L^2(\Omega)$. Then there exist unique solutions
$u\in L^2(0,T;H_D^1(\Omega)\cap H^2(\Omega))\cap H^1(0,T;L^2(\Omega))$ 
and $w\in H_D^1(\Omega)\cap H^2(\Omega)$ to the problems
\begin{align}
  & \pa_t u - \Delta u = f\ \mbox{in }\Omega,\ t>0, \quad
  u=0\ \mbox{on }\Gamma_D, \quad \na u\cdot\nu=0\ \mbox{on }
  \Gamma_N, \quad u(0)=u^0\ \mbox{in }\Omega, \label{1.mixed} \\
  & \Delta w = g\quad\mbox{in }\Omega, \quad 
  w=0\quad\mbox{on }\Gamma_D, \quad 
  \na w\cdot\nu=0\quad\mbox{on }\Gamma_N, \label{1.ellip}
\end{align}
satisfying the inequalities
\begin{align*}
  \|u\|_{L^2(0,T;H^2(\Omega))} + \|\pa_t u\|_{L^2(\Omega_T)}
  &\le C\|f\|_{L^2(\Omega_T)}+C\|u^0\|_{L^2(\Omega)}, \\
  \|w\|_{L^2(0,T;H^2(\Omega))} &\le C\|g\|_{L^2(\Omega_T)}.
\end{align*}
\end{itemize}

Assumptions (A1)--(A4) are conditions on the given data. The restriction to two space dimensions comes from the estimation of the quadratic drift terms, which is reduced to a linear expression by means of a logarithmic-type Gagliardo--Nirenberg inequality; see \eqref{3.2d} for details. The first condition in Assumption (A5) means that there are no LI-cutsets in the network, i.e.\ no cutsets consisting of inductances and/or current sources only. The second condition means that there are no CV-loops in the networks, i.e.\ no loops consisting of capacitances and voltage sources only. The consistency condition in Assumption (A6) comes from the differential-algebraic character of the network equations; see Section \ref{sec.decomp} for details.

Assumption (A7) is needed to obtain strong solutions to the approximated drift-diffusion and Poisson equations, which ensures the integrability of the normal derivative at the boundary, needed for the coupling with the electric network (see, for instance, \eqref{3.H2}). We note that the $H^2(\Omega)$ regularity for the electric potential was also needed in \cite[Sec.~3.2]{AlRo21} to estimate the drift term $n\na V$ in $L^2(\Omega)$. 

The regularity assumptions restrict the geometry of the domain. Indeed, it is well known that solutions to general linear elliptic problems with mixed boundary conditions satisfy $w\in W^{1,q_0}(\Omega)$ for some $q_0>2$ \cite{Gro94}. Under some conditions on the Dirichlet and Neumann boundary parts (in particular, $\Gamma_D$ and $\Gamma_N$ intersect with an ``angle'' not larger than $\pi$), the regularity improves to $q_0>3$ \cite[Theorem 4.8]{DiRe15}. If the domain is a two-dimensional polygon, precise regularity results can be found in \cite{Gri85}. Shamir's counterexample in \cite{Sha68} shows that $q_0\ge 4$ cannot be expected, even if the domain and the data are smooth. 

Assumption (A7) for the elliptic problem \eqref{1.ellip} is satisfied if the Dirichlet and Neumann boundaries do not meet or if they meet with inner angle $\omega<\pi/2$. By \cite[Sec.~II.3, Theorem~3.3]{Tem97}, the parabolic Neumann problem 
\begin{align*}
  \pa_t v - \Delta v = f\ \mbox{in }\Omega,\ t>0, \quad
  \na v\cdot\nu=0\ \mbox{on }\pa\Omega, \quad 
  v(0)=v^0\ \mbox{in }\Omega,
\end{align*}
possesses a unique strong solution if $f\in L^2(\Omega_T)$, $v^0\in H^1(\Omega)$ satisfying
\begin{align*}
  \|v\|_{L^2(0,T;H^2(\Omega))} 
  \le C\|f\|_{L^2(\Omega_T)}+C\|v^0\|_{L^2(\Omega)}.
\end{align*}
The regularity of parabolic mixed problems like \eqref{1.mixed} is more delicate. A result in this direction, but for a different parabolic problem, is due to \cite{HaRe09}, where it is proved that elliptic operators of divergence type fulfill maximal parabolic regularity. We consider the regularity for \eqref{1.mixed} as an assumption, since our focus is the understanding of the analytical treatment of the coupling between the semiconductor and network equations and not on the technical regularity issue.


\subsection{Main result and key ideas}

Set $\bar{D}=\exp(-\bar{V})$. We introduce the free energy 
\begin{align}\label{1.H}
  H[n,p,D,y] &= \int_\Omega\bigg\{n\bigg(\log\frac{n}{\bar{n}}-1\bigg)
  + p\bigg(\log\frac{p}{\bar{p}}-1\bigg)
  + D\bigg(\log\frac{D}{\bar{D}}-1\bigg)\bigg\}dx \\
  &\phantom{xx}+ \frac{\lambda^2}{2}\int_\Omega|\na(V-V_A)|^2dx
  + \frac12 y^T(E_1 - \pi^TSMS^T\pi)y, \nonumber 
\end{align}
where the first integral denotes the internal energy of the charge carriers, the second integral is the electric energy with the stationary potential \eqref{2.VA} (not depending on $y$), and the last term is the network energy. The matrices $E_1$ and $M$ are introduced in Lemma \ref{lem.decoupl} and \eqref{2.M}, respectively, $\pi$ is a projection matrix, defined in Section \ref{sec.decomp}, and $y=y(t)$ is the solution to the differential part of the network model; see Lemma \ref{lem.decoupl}. The matrix $E_1 - \pi^TSMS^T\pi$ is positive definite with respect to the variable $y$; see Lemma \ref{lem.E1}. Our main result is the existence of global solutions. 

\begin{theorem}[Global existence]\label{thm.ex}
Let Assumptions (A1)--(A7) hold and let $T>0$. Then there exists a unique weak solution to \eqref{1.dd1}--\eqref{1.bcD} on $[0,T]$ satisfying
\begin{align*}
  & n,\,p,\,D\in L^2(0,T;H^1(\Omega))\cap H^1(0,T;H^1(\Omega)'), 
  \quad y\in H^1(0,T;\R^{m+n_L}), \\
  & J_n,\ J_p,\,J_D\in L^2(\Omega_T), \quad
  V\in L^2(0,T;H^2(\Omega))\cap H^1(0,T;H^1(\Omega)').
\end{align*} 
Moreover, the following free energy inequality holds:
\begin{align*}
  H[&(n,p,D,y)(t)] + \frac12\int_0^t\int_\Omega
  |2\na\sqrt{n}-\sqrt{n}\na V|^2 dxd\tau \\
  &+ \frac12\int_0^t\int_\Omega
  |2\na\sqrt{p}+\sqrt{p}\na V|^2 dxd\tau 
  + \frac12\int_0^t\int_\Omega
  |2\na\sqrt{D}+\sqrt{D}\na V|^2 dxd\tau \le C,
\end{align*}
where $C>0$ depends on the time $T>0$ and the free energy at time $t=0$. 
\end{theorem}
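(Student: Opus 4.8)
The plan is to build solutions by a Leray--Schauder fixed-point argument on a regularized problem, to extract uniform bounds from the free energy \eqref{1.H}, and then to remove the regularization. First I would fix $(\hat n,\hat p,\hat D,\hat y)$ in $X=L^2(\Omega_T)^3\times C^0([0,T];\R^{m+n_L})$, recover the network node potentials $u$ from $\hat y$ through the decomposition of the network DAE (Lemma \ref{lem.decoupl}), and solve the Poisson equation \eqref{1.V} with boundary datum $\bar V=V_{\rm bi}+S^Tu$. The elliptic part of Assumption~(A7) gives $V\in L^2(0,T;H^2(\Omega))$, so the frozen drift velocity $\na V$ lies in $L^2(0,T;L^q(\Omega))$ for every finite $q$. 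With $\na V$ fixed I solve the three linear parabolic problems for $(n,p,D)$ --- of the mixed type \eqref{1.mixed} for $n,p$ and of Neumann type for $D$ --- whose strong solutions and $H^2$ bounds are supplied by the parabolic part of~(A7), and simultaneously the differential part of the network model for $y$, driven by the terminal current $I_M[J;u]$. To keep the map well defined and compact I truncate the density in the drift term (and regularize the logarithm if necessary) and introduce a homotopy parameter $\sigma\in[0,1]$ scaling the coupling and the data. Continuity and compactness of the resulting map follow from the $H^2$ estimates and the Aubin--Lions lemma, using the $H^1(\Omega)'$ bound on $\pa_t(n,p,D)$ read off from the equations.

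The core of the argument is the a priori estimate bounding every fixed point uniformly in $\sigma$. I would test the weak form of the three density equations with the relative chemical potentials $\log(n/\bar n)-(V-V_A)$, $\log(p/\bar p)+(V-V_A)$, and $\log(D/\bar D)+(V-V_A)$, and add the network equations multiplied by $y$. The drift--diffusion part then reproduces exactly the dissipation integrals $\int_\Omega|2\na\sqrt n-\sqrt n\,\na V|^2$, $\int_\Omega|2\na\sqrt p+\sqrt p\,\na V|^2$, and $\int_\Omega|2\na\sqrt D+\sqrt D\,\na V|^2$ of the statement, while the Dirichlet boundary terms created by the integration by parts are precisely the terminal currents $I_D^j$. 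By construction of the coupling these combine with the time derivative of the network energy $\tfrac12 y^T(E_1-\pi^TSMS^T\pi)y$; its positive definiteness (Lemma \ref{lem.E1}) renders the network contribution coercive and makes the device--network cross terms telescope. The outcome is a differential inequality of the form $\tfrac{d}{dt}H+(\text{dissipation})\le(\text{drift remainder})$.

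The main obstacle --- the reason the statement is confined to two dimensions and was open for more than two species --- is to absorb the drift remainder, which after expanding the squares reduces to controlling $\int_\Omega n|\na V|^2\,dx$ and its analogues in $p$ and $D$. For two species one exploits monotonicity of the drift; this is lost for three species, so I would instead combine the $H^2$-regularity of $V$ from~(A7) with a logarithmic Gagliardo--Nirenberg (Brezis--Gallouet-type) inequality in $\R^2$,
\begin{align*}
  \|\na V\|_{L^\infty(\Omega)}^2
  \le C\|V\|_{H^2(\Omega)}^2\Big(1+\log\big(1+\|V\|_{H^2(\Omega)}/\|V\|_{H^1(\Omega)}\big)\Big),
\end{align*}
and estimate $\int_\Omega n|\na V|^2\le\|\na V\|_{L^\infty(\Omega)}^2\|n\|_{L^1(\Omega)}$, where $\|n\|_{L^1(\Omega)}$ is controlled by $H$ through the entropy. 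Since the Poisson equation bounds $\|V\|_{H^2(\Omega)}$ by $\|n-p-D+A\|_{L^2(\Omega)}$, and $\|n\|_{L^2(\Omega)}=\|\sqrt n\|_{L^4(\Omega)}^2\le C\|\sqrt n\|_{L^2(\Omega)}\|\na\sqrt n\|_{L^2(\Omega)}+\dots$ is in turn dominated by the dissipation, the logarithmic factor keeps this feedback loop subcritical, and a nonlinear Gronwall argument closes the estimate and yields the asserted free energy inequality.

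With uniform bounds at hand, Leray--Schauder provides a solution of the regularized system; letting the truncation and regularization parameters tend to zero and invoking the same estimates together with weak and strong (Aubin--Lions) compactness, I pass to the limit to recover \eqref{1.dd1}--\eqref{1.bcD} and the free energy inequality, the stated regularity of $n,p,D,V,y$ following from the dissipation bounds and~(A7). Uniqueness I would finally obtain by an $L^2$-type estimate for the difference of two solutions, splitting each drift term as $n\,\na(V-V^\ast)+(n-n^\ast)\na V^\ast$, using the $H^2$-control of $\na V^\ast$ and the boundedness of the densities to absorb the nonlinear contributions, and concluding with Gronwall's lemma.
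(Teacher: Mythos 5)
Your overall architecture (truncation of the drift terms, a Leray--Schauder iteration in which the Poisson equation, the three linear parabolic problems and the network ODE are solved successively, a free energy estimate in which the boundary terms produced by the integration by parts are exactly the terminal currents $I_D^j$ and cancel against the time derivative of the network energy $\tfrac12 y^T(E_1-\pi^TSMS^T\pi)y$, and finally Aubin--Lions compactness) coincides with the paper's. However, the step on which everything hinges --- absorbing the drift contribution in two dimensions --- rests on an inequality that is false. You invoke
\begin{align*}
  \|\na V\|_{L^\infty(\Omega)}^2
  \le C\|V\|_{H^2(\Omega)}^2\Big(1+\log\big(1+\|V\|_{H^2(\Omega)}/\|V\|_{H^1(\Omega)}\big)\Big),
\end{align*}
but for a fixed $V\in H^2(\Omega)$ the right-hand side is finite while $\na V$ need not be bounded ($H^2(\Omega)\not\hookrightarrow W^{1,\infty}(\Omega)$ for $\Omega\subset\R^2$; e.g.\ $\na V\sim\log\log(1/|x|)$ with $D^2V\in L^2$). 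The genuine Brezis--Gallouet inequality controls $\|u\|_{L^\infty}$ by $\|u\|_{H^1}$ times a logarithm of $\|u\|_{H^2}$; applied to $u=\na V$ it would require $V\in H^3(\Omega)$ or at least $W^{2,q}(\Omega)$ with $q>2$, which is unavailable here: Assumption (A7) only provides $H^2$, and for mixed Dirichlet--Neumann problems Shamir's counterexample (cited in the paper) rules out such higher regularity in general. There is also a circularity you do not resolve: the dissipation controls $\int_\Omega|2\na\sqrt n-\sqrt n\na V|^2$, so extracting $\|\na\sqrt n\|_{L^2}^2$ from it already requires the bound on $\int_\Omega n|\na V|^2$ that you are trying to prove.

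The paper closes the loop differently, and this is worth internalizing. First, the free energy inequality itself produces no $\int_\Omega n|\na V|^2$ remainder: testing with the (regularized) relative chemical potentials yields the full squares $|2\na\sqrt{n}-\sqrt{n}\na V|^2$ as dissipation, and the only leftovers are $\int_\Omega|\na(V-V_A)|^2$ and $\int_\Omega(T_k(n_k)+T_k(p_k)+T_k(D_k))$, both dominated by $H$ itself, so Gronwall closes this estimate without any dimension restriction. The two-dimensionality enters only in a \emph{separate} $L^2$ estimate (test function $n_k$), where the cubic term $\int_\Omega T_k(n_k)\na V_k\cdot\na n_k$ is handled by combining Gr\"oger's $W^{1,q_0}$ elliptic regularity for some $q_0>2$ with the logarithmic Gagliardo--Nirenberg inequality of Biler--Hebisch--Nadzieja, which trades the entropy bound $\|n_k\log n_k\|_{L^\infty(0,T;L^1(\Omega))}$ already supplied by the free energy for the estimate \eqref{3.2d} with exponent $1+(3d-2)/(d+2)\le 2$ iff $d\le 2$; no $L^\infty$ bound on $\na V$ is ever needed. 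Finally, your uniqueness argument presupposes $L^\infty$ bounds on the densities, which under (A1)--(A7) alone are not available (they are the content of Theorem \ref{thm.bd} under additional hypotheses), so that step also needs justification.
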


To prove this theorem, we approximate the memristor equations \eqref{1.dd1}--\eqref{1.V} by truncating of the drift terms with parameter $k\in\N$ and apply the Leray--Schauder fixed-point theorem. The electric network equations are decomposed into a differential and an algebraic part; see Section \ref{sec.decomp}. The differential system is solved in the iteration argument simply by time integration. 

We need to find uniform estimates to pass to the limit $k\to\infty$ and to remove the truncation. These estimates are derived from the approximate free energy inequality. This proof requires some tools. First, we use test functions that are approximations of $\log n_k$ (and correspondingly for $p_k$ and $D_k$). The important point here is that the truncation, which is taken from \cite{JJZ23}, satisfies an approximate version of the chain rule $\sqrt{n_k}\na\log n_k=2\na\sqrt{n_k}$. The $H^2(\Omega)$ regularity from Assumption (A7) allows us to compute the dissipation of the coupling conditions, which in fact vanishes. Second, the drift terms are estimated independent of the truncation parameter $k$ by applying a logarithmic-type Gagliardo--Nirenberg inequality as in \cite[Sec.~3]{JJZ23}. At this point, we need the restriction to two space dimensions. 

The existence proof of \cite{AlRo21} for the coupled drift-diffusion electric-network model relies on a monotonicity property of the drift terms which is available for two species only. We refer to Remark \ref{rem.mistake} for the strategy of the proof in \cite{AlRo21}.

Our second main result are positive lower and upper bounds for the densities.

\begin{theorem}[Lower and upper bounds]\label{thm.bd}
Let $n^0$, $p^0$, $D^0\in L^\infty(\Omega)$ satisfy $n^0\ge c_0$, $p^0\ge c_0$, $D^0\ge c_0$ in $\Omega$ and let $\bar{n}\ge c_0$, $\bar{p}\ge c_0$, $\bar{D}\ge c_0$ on $\Gamma_D\times(0,T)$ for some $c_0>0$. Then there exist constants $c>0$ and $C>0$ depending on $T>0$ such that
\begin{align*}
  c\le n(t)\le C, \quad c\le p(t)\le C, \quad
  c\le D(t)\le C \quad\mbox{for }0\le t\le T.
\end{align*}
\end{theorem}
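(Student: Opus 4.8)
The plan is to establish the lower and upper bounds separately, using the now-available global weak solution from Theorem~\ref{thm.ex} together with the strict positivity of the initial and boundary data. The natural tools are the maximum principle and Stampacchia's truncation method (testing the weak formulation with suitably chosen truncations), carried out on the approximate system and then passed to the limit, or performed directly on the limit solution if the regularity permits.

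\medskip
\emph{Upper bound.} For the densities $n$ and $p$, which satisfy mixed Dirichlet--Neumann conditions, I would test the weak formulation for $n$ with $(n-M)^+$ for a constant $M\ge\max\{\|\bar n\|_{L^\infty},\|n^0\|_{L^\infty}\}$ to be determined. The diffusion term produces $\int_\Omega|\na(n-M)^+|^2\,dx\ge 0$, while the drift term $\int_\Omega n\na V\cdot\na(n-M)^+\,dx$ must be controlled. Using the $H^2(\Omega)$ regularity of $V$ from Assumption~(A7) and the two-dimensional Gagliardo--Nirenberg inequality to bound $\na V$, one obtains a Gronwall-type inequality for $\int_\Omega((n-M)^+)^2\,dx$, forcing this quantity to stay zero if it starts at zero; this gives $n\le C$. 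The density $D$ obeys no-flux conditions on the whole boundary, so the same truncation argument applies with $M\ge\|D^0\|_{L^\infty}$, but now the quasilinear drift $D\na D$ (note $J_D=-(\na D+D\na D)$) must be handled: testing with $(D-M)^+$ gives a drift contribution $\int_\Omega D\na D\cdot\na(D-M)^+\,dx=\int_\Omega D\,|\na(D-M)^+|^2\,dx\ge 0$ on the set $\{D>M\}$, which actually helps rather than hurts.

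\medskip
\emph{Lower bound.} Here I would exploit the logarithmic structure. Setting $w=\log(n/\bar n)$ or equivalently testing with $-(w-\log c)^-$ type functions, or more directly testing the equation for $n$ with $-(n-m)^-/n$ (a test function comparing to a small constant $m\le\min\{c_0,\inf\bar n\}$), the diffusion and drift combine into the dissipation $|2\na\sqrt n-\sqrt n\,\na V|^2$ already controlled by the free energy inequality. A cleaner route is to introduce $z=c\,e^{V}$ (respectively $c\,e^{-V}$ for the positively-charged species $p$ and $D$) as a subsolution barrier: since $\lambda^2\Delta V=n-p-D+A$ and $V$ is bounded in $L^\infty$ via elliptic regularity, one shows that a sufficiently small multiple of such an exponential is a subsolution, and a comparison argument yields $n\ge c>0$. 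For $D$, whose equation is quasilinear, the comparison function must be adapted, but the no-flux condition makes constants natural subsolutions, and the quadratic drift again contributes a favorable sign near the lower level set.

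\medskip
The main obstacle I anticipate is twofold. First, the mismatch of boundary conditions---Dirichlet for $n,p$ but Neumann for $D$---means the two types of bounds require structurally different comparison functions, and the lower bound for $D$ cannot rely on boundary data pinning it from below, so one must propagate positivity purely from the interior dynamics and the initial condition $D^0\ge c_0$. Second, and more seriously, the bounds are coupled through the potential $V$: the $L^\infty$ estimate on $V$ (needed to bound $\na V$ and to construct the exponential barriers) depends via the Poisson equation on the very bounds for $n$, $p$, $D$ one is trying to prove. I would resolve this by a bootstrap: from Theorem~\ref{thm.ex} the densities lie in $L^2(0,T;H^1(\Omega))$, hence by the two-dimensional Sobolev embedding in $L^q(\Omega_T)$ for all $q<\infty$, which already yields $V\in L^2(0,T;W^{2,q})\hookrightarrow L^\infty$; with $\na V$ controlled, the Stampacchia iteration closes the $L^\infty$ bounds for the densities, and these in turn can be fed back to sharpen the estimates uniformly in $t$. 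The quasilinear term in $J_D$ requires care in the Gagliardo--Nirenberg step but, as noted, its sign is favorable for both bounds.
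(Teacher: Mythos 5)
Your overall plan (truncation tests plus Gronwall, upper bound first, then positivity) is in the right spirit, but both halves stall at the step where the Gronwall inequality is supposed to close, and the paper resolves each stall with a device your sketch does not contain. For the upper bound, testing with $(n-M)^+$ at a \emph{fixed} level $M$ and rewriting the drift term through the Poisson equation produces, besides a quadratic term $\tfrac{1}{2\lambda^2}\int_\Omega(p+D-n-A)\,((n-M)^+)^2dx$ that can indeed be absorbed in two dimensions by Gagliardo--Nirenberg, the linear term $\tfrac{M}{\lambda^2}\int_\Omega(p+D-n-A)\,(n-M)^+dx$. Since at this stage $p$ and $D$ are only known in $L^\infty(0,T;L^2(\Omega))$, this term is controlled only by $C\sqrt{y}$ with $y=\|(n-M)^+\|_{L^2(\Omega)}^2$, and the differential inequality $y'\le Cy+C\sqrt{y}$ with $y(0)=0$ does \emph{not} force $y\equiv0$ (the ODE is not Lipschitz at $y=0$). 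This is precisely the circularity you flag at the end, but your proposed bootstrap does not break it: $n,p,D\in L^q(\Omega_T)$ for finite $q$ yields $V\in L^\infty$, not $p,D\in L^\infty$ or $\na V\in L^\infty$, so the sublinear term survives. The paper does not attempt a one-level truncation; it obtains the upper bound by invoking the Alikakos (Moser-type) iteration of \cite[Theorem 1.2]{JJZ23}, i.e.\ $L^q(\Omega)$ bounds uniform in $q$ via the logarithmic Gagliardo--Nirenberg inequality followed by $q\to\infty$. Some iteration over levels or exponents is genuinely needed.

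The lower bound has the same defect at a constant level $m$: the term $-\tfrac{m}{\lambda^2}\int_\Omega(n-p-D+A)(n-m)^-dx$ is only $O\big(m\int_\Omega|(n-m)^-|dx\big)=O(\sqrt{y})$, so Gronwall again fails to give $y\equiv0$. The paper's key idea, missing from your sketch, is the time-decaying barrier $m(t)=m_0e^{-\mu t}$ with $m_0=\min\{\inf_\Omega n^0,\inf_{\Gamma_D}\bar n\}$: the extra contribution $\pa_tm=-\mu m$ generates a sign-definite term proportional to $\mu m\int_\Omega|(n-m)^-|dx$ that absorbs the offending one once $\mu\ge2\lambda^{-2}(M+\|A\|_{L^\infty(\Omega)})$, where $M$ is the \emph{already established} upper bound; Gronwall then yields $(n-m)^-\equiv0$ and $c=m_0e^{-\mu T}$. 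Note the logical order: the lower bound needs the upper bound through the choice of $\mu$, rather than both being run simultaneously. Your alternative barrier $c\,e^{\pm V}$ is also delicate here, because $V$ is time-dependent with only $\pa_tV\in L^2(0,T;H^1(\Omega)')$, so $c\,e^{V}$ is not an admissible parabolic subsolution without additional regularity. Finally, the ``quasilinear'' flux $D\na D$ you spend effort on is a typo in \eqref{1.dd2}; throughout the analysis (see \eqref{3.Dk} and the dissipation terms $|2\na\sqrt D+\sqrt D\na V|^2$) the intended flux is $J_D=-(\na D+D\na V)$, so $D$ is treated like $p$ up to the different boundary conditions.
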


The $L^\infty(\Omega)$ bounds follow from \cite[Theorem 1.2]{JJZ23}, while the lower bounds are new. They are proved by a Stampacchia truncation argument. The upper bound depends on the end time $T$ because of the application of Gronwall's lemma. The lower bound depends on the upper bound and in particular heavily on $T$. The determination of upper and lower bounds that are uniform in time seems to require some assumptions on the Dirichlet boundary data. This is subject to future work.

The paper is organized as follows. Section \ref{sec.model} is concerned with details on the modeling of the coupled system, making precise the coupling conditions, the decomposition of the differential-algebraic system in a differential and an algebraic part, and the full model of semiconductor equations, decoupled network equations, and coupling conditions. Theorem \ref{thm.ex} is proved in Section \ref{sec.proof}, while the proof of Theorem \ref{thm.bd} is presented in Section \ref{sec.bd}. The paper ends with some conclusions in Section \ref{sec.con}.


\section{Details on the modeling}\label{sec.model}

We detail the coupling conditions between the device and circuit models,  describe the decomposition of the circuit model into a differential and an algebraic part, and summarize the full model.

\subsection{Current through the terminals}

The total current of the device through the terminal $\Gamma_D^j$ is given by the sum of the particle currents and the displacement current, which takes into account the change of the electric field with respect to time,
\begin{align*}
  I_D^j = -\int_{\Gamma_D^j}(J_n+J_p+J_D
  - \lambda^2\pa_t\na V)\cdot\nu d\sigma,
  \quad j=1,2.
\end{align*}
As in \cite[Sec.~2.3]{AlRo21}, it is convenient to write the total current density $J:=J_n+J_p+J_D$ as an integral over $\Omega$. For this, we introduce the auxiliary functions $w_i\in H^1(\Omega)$ as the unique solution \cite[Theorem 4.10]{McL00} to
\begin{align*}
  \Delta w_i = 0 \quad\mbox{in }\Omega, \quad  
  w_i = \delta_{ij}\quad\mbox{on }\Gamma_D^j,\ j=1,2, \quad
  \na w_i\cdot\nu = 0 \quad\mbox{on }\Gamma_N.
\end{align*}
Adding equations \eqref{1.dd1} and taking into account the Poisson equation \eqref{1.V}, we obtain
\begin{align*}
  \diver(J-\lambda^2\pa_t\na V)
  = \pa_t(n-p-D) - \lambda^2\pa_t\Delta V = 0 \quad\mbox{in }\Omega.
\end{align*}
Hence, by the divergence theorem (at least formally),
\begin{align*}
  I_D^1+I_D^2 = \int_\Omega\diver(J-\lambda^2\pa_t\na V)dx = 0.
\end{align*}
This corresponds to the physical intuition that the ingoing current equals the outgoing current. Moreover, by the divergence theorem again,
\begin{align*}
  I_D^j = -\int_{\pa\Omega}w_j(J-\lambda^2\pa_t\na V)\cdot\nu ds
  = -\int_\Omega \na w_j\cdot(J-\lambda^2\pa_t\na V)dx.
\end{align*}
The advantage of this formulation is that only the regularity $J\in L^2(\Omega_T)$ and $\pa_t V\in L^2(0,T;H^1(\Omega))$ is needed, while higher regularity is required to define the integral of $J\cdot\nu$ over $\Gamma_D$. 

Following \cite[Sec.~2.3]{AlRo21}, we decompose the electric potential to make explicit the dependence of the current on the external applied potential. Observing that $A$ and $V_{\rm bi}$ are time-independent, we introduce the stationary part $V_A$ of the electric potential as the unique solution to
\begin{align}\label{2.VA}
  \lambda^2\Delta V_A = A\quad\mbox{in }\Omega, \quad
  V_A = V_{\rm bi}\quad\mbox{on }\Gamma_D, \quad
  \na V_A\cdot\nu=0 \quad\mbox{on }\Gamma_N.
\end{align}
Furthermore, we introduce the linear solution operator $\mathcal{L}:H^{-1}_D(\Omega)\to H^1(\Omega)$, $\mathcal{L}[g]=f$, where $f\in H_D^1(\Omega)$ is the unique solution to
\begin{align}\label{2.L}
  \lambda^2\Delta f = g\quad\mbox{in }\Omega, \quad
  f=0\quad\mbox{on }\Gamma_D, \quad 
  \na f\cdot\nu=0\quad\mbox{on }\Gamma_N.
\end{align}
Then the electric potential can be written as
\begin{align}\label{2.V}
  V = V_A + \sum_{k=1}^2 w_ku_{D}^k + \mathcal{L}[n-p-D]
  \quad\mbox{in }\Omega,
\end{align}
where $u_D=(u_D^1,u_D^2)^T$. Differentiating this expression and taking into account that $V_A$ is time-independent, the displacement current density becomes
\begin{align*}
  \pa_t\na V = \sum_{k=1}^2\na w_k\frac{du_D^k}{dt} 
  + \na\mathcal{L}[\pa_t(n-p-D)] 
  = \sum_{k=1}^2\na w_k\frac{du_D^k}{dt} 
  + \na\mathcal{L}[\diver J].
\end{align*}
We insert this formulation into the total current $I_D^j$:
\begin{align}\label{2.IDj}
  & I_D^j = \sum_{k=1}^2 M_{jk}\frac{du_D^k}{dt}
  + \mathcal{I}_j[J;u_D], \quad\mbox{where} \\
  & \mathcal{I}_j[J;u_D] = -\int_\Omega\na w_j\cdot\big(J-\lambda^2\na\mathcal{L}[\diver J]\big)dx,
  \nonumber \\
  & M_{jk} = \lambda^2\int_\Omega\na w_j\cdot\na w_k dx\quad
  \mbox{for }j,k=1,2. \nonumber 
\end{align}
It follows for weak solutions satisfying $J\in L^2(\Omega_T)$ that $\diver J\in L^2(0,T;H^{-1}_D(\Omega))$ such that $\mathcal{I}_j[J;u_D]$ is well defined. The functional $\mathcal{I}[J;u_D]$ depends on $u_D$ through the potential $V$; see \eqref{2.V}. 

We claim that the matrix $M=(M_{jk})\in\R^{2\times 2}$ is positive definite. Indeed, the sum $w_1+w_2$ solves the elliptic problem 
\begin{align*}
  \Delta(w_1+w_2)=0\quad\mbox{in }\Omega, \quad
  w_1+w_2=1\quad\mbox{on }\Gamma_D, \quad
  \na(w_1+w_2)\cdot\nu=0\quad\mbox{on }\Gamma_N.
\end{align*}
We infer that $w_1+w_2=1$ in $\Omega$ from which we conclude that $\mathcal{I}_1[J;u_D]+\mathcal{I}_2[J;u_D]=0$ and $M_{j1}+M_{j2}=0$ for $j=1,2$. We obtain $M_{11}=-M_{12}=-M_{21}=M_{22}$ and hence
\begin{align}\label{2.M}
  M = \lambda^2\|\na w_1\|_{L^2(\Omega)}^2\begin{pmatrix}
  1 & -1 \\ -1 & 1 \end{pmatrix}.
\end{align}
Consequently, $M$ is positive definite.

To formulate \eqref{2.IDj} in a more compact form, we set $\mathcal{I}=(\mathcal{I}_1,\mathcal{I}_2)^T\in\R^2$ and $I_D=(I_D^1,I_D^2)^T\in\R^2$. Recall definition \eqref{1.S} of the selection matrix $S$. Then $u_D=S^Tu\in\R^2$ and 
\begin{align}\label{2.IM}
  I_M[J;u] := S(I_D^1,I_D^2)^T = SMS^T\frac{du}{dt} + S\mathcal{I}[J;S^Tu]\in\R^m.
\end{align}

We conclude that the full model consists of the Poisson--drift--diffusion equations \eqref{1.dd1}--\eqref{1.bcD}, the network equations \eqref{1.dae1}--\eqref{1.ic}, and the coupling conditions 
\begin{align*}
  \bar{V} = V_{\rm bi} + S^Tu, \quad 
  I_M[J;u] = SMS^T\frac{du}{dt} + S\mathcal{I}[J;S^Tu].
\end{align*}


\subsection{Decomposition into a differential and an algebraic part}
\label{sec.decomp}

Introducing the variable $x=(u,i_L,i_V)$, system \eqref{1.dae1}--\eqref{1.dae2} can be written more compactly as
\begin{align}\label{2.dxdt}
  E\frac{dx}{dt} = Ax + F[J;x] + s(t)\quad\mbox{for }t>0, \quad  
  x(0) = x^0,
\end{align}
where $x^0=(u^0,i_L^0,i_V^0)\in\R^n$,
\begin{align*}
  E = \begin{pmatrix}
  A_CCA_C^T+SMS^T & 0 & 0 \\
  0 & L & 0 \\
  0 & 0 & 0
  \end{pmatrix}, \quad
  A = \begin{pmatrix}
  -A_RRA_R^T & -A_L & -A_V \\
  A_L^T & 0 & 0 \\
  A_V^T & 0 & 0 
  \end{pmatrix}\in\R^{n\times n},
\end{align*}
and
\begin{align}\label{2.sF}
  s(t) = \begin{pmatrix}
  A_I i_I(t) \\ 0 \\ v_V(t)
  \end{pmatrix}, \quad
  F[J;x] = -\pi^TS\mathcal{I}[J;S^T\pi x]\in\R^n.
\end{align}
We have set $n=m+n_L+n_V$, and $\pi x=\pi(u,i_L,i_V)^T=u$ is the projection matrix on the first block. The network variable $x(t)$ should not be confused with the spatial variable $x\in\Omega$. We keep this double notation, since it is standard in electric network theory and the meaning can easily be understood from the context.

We decompose \eqref{2.dxdt} in such a way that the differential and algebraic components decouple. To this end, we decompose $\R^n=\operatorname{ker}E\oplus\operatorname{ran}E$. Let $P:\R^n\to\operatorname{ker}E$ be the projector onto $\operatorname{ker}E$ and $Q=I-P$ be the projector onto $\operatorname{ran}E$. For given $x\in\R^n$, we set $y:=Px$ and $z:=Qx$, which means that $x=y+z$. The projection $Q$ can be written more explicitly as
\begin{align*}
  Q = \begin{pmatrix}
  Q_{CS} & 0 & 0 \\ 0 & 0 & 0 \\ 0 & 0 & I
  \end{pmatrix},
\end{align*}
where $Q_{CS}$ is the projection on $\operatorname{ker}(A_C,S)^T$. Observe that $F$ does not depend on $z$, since
\begin{align}\label{2.STpiQ}
  S^T\pi Q = \begin{pmatrix} S^TQ_{CS} & 0 & 0 \end{pmatrix} = 0
\end{align}
implies that $S^T\pi x = S^T\pi Px + S^t\pi Qx = S^T\pi y$. This means that $F$ depends on $y$ only. Therefore, we write, slightly abusing the notation, $F[J;y]:=F[J;x]$, where $y=Px$. 

\begin{lemma}\label{lem.decoupl}
Let the index-1 conditions in Assumption (A5) hold. Then \eqref{2.dxdt} is equivalent to
\begin{align}\label{2.yz}
  \frac{dy}{dt} = PE_1^{-1}(A_1y+F[J;y]+s(t)), \quad
  z = QE_1^{-1}(A_1y+s(t)), \quad y(0)=y^0,
\end{align}
where $E_1:=E-AQ$, $A_1:=AP$, and $y^0:=Px^0$. In particular, it holds that $QE_1^{-1}F[J;y]=0$. 
\end{lemma}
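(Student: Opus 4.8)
My plan is to treat \eqref{2.dxdt} by the standard index-1 decoupling of the matrix pencil $(E,A)$, the only nonroutine ingredient being the identity $QE_1^{-1}F[J;y]=0$, which is what removes $F$ from the algebraic relation. From the block forms of $E$ and $Q$, together with the positive definiteness of $C$, $L$, and $M$, one reads off $\operatorname{ran}Q=\operatorname{ker}E$ (indeed $\operatorname{ker}(A_CCA_C^T+SMS^T)=\operatorname{ker}A_C^T\cap\operatorname{ker}S^T=\operatorname{ran}Q_{CS}$), hence $EQ=0$ and $EP=E$, which give $E_1P=(E-AQ)P=E$ and $E_1Q=-AQ$. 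I first record that (A5) makes $E_1=E-AQ$ invertible: this is the usual index-1 criterion, verifiable by the block computation below with vanishing right-hand side, which then forces $\xi=0$ (see also Lemma \ref{lem.E1}). Once $E_1^{-1}$ exists, $E=E_1P$ yields $E_1^{-1}E=P$, while $Az=-E_1z$ for $z\in\operatorname{ran}Q$ and $Ay=A_1y$ for $y\in\operatorname{ran}P$.

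For the forward implication I would insert $E=E_1P$ into \eqref{2.dxdt} and use $P\frac{dx}{dt}=\frac{dy}{dt}$ to obtain $E_1\frac{dy}{dt}=Ax+F[J;y]+s(t)$. Substituting $Ax=Ay+Az=A_1y-E_1z$ and applying $E_1^{-1}$ gives $\frac{dy}{dt}=E_1^{-1}(A_1y+F+s)-z$. Applying the projector $P$, which fixes $\frac{dy}{dt}\in\operatorname{ran}P$ and kills $z\in\operatorname{ran}Q$, produces the differential equation in \eqref{2.yz}; applying $Q$, which annihilates $Q\frac{dy}{dt}=QP\frac{dx}{dt}=0$ and fixes $z$, produces $z=QE_1^{-1}(A_1y+F+s)$. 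It then remains only to discard the term $QE_1^{-1}F$.

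The crux, and the main obstacle, is therefore the claim $QE_1^{-1}F[J;y]=0$. Since $F[J;y]=-\pi^TS\mathcal{I}[J;S^T\pi y]$, it suffices to prove the matrix identity $QE_1^{-1}\pi^TS=0$. I would fix $w\in\R^2$, set $\xi=E_1^{-1}\pi^TSw$ with blocks $\xi=(\xi_1,\xi_2,\xi_3)$, so that $E_1\xi=\pi^TSw=(Sw,0,0)^T$, and write out the three block rows. Putting $\eta:=Q_{CS}\xi_1\in\operatorname{ker}A_C^T\cap\operatorname{ker}S^T$ and testing the first row with $\eta^T$ annihilates the $A_CCA_C^T$, $SMS^T$, and $Sw$ terms, while the $A_V$ term vanishes by the third row $A_V^T\eta=0$; what survives is $\eta^TA_RRA_R^T\eta=0$, so positive definiteness of $R$ forces $A_R^T\eta=0$ and hence $\eta\in\operatorname{ker}(S,A_C,A_R,A_V)^T=\{0\}$ by the first condition in (A5). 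Thus $Q_{CS}\xi_1=0$, the first row collapses to $(A_CCA_C^T+SMS^T)\xi_1+A_V\xi_3=Sw$, and left-multiplication by $Q_{CS}^T$ (using $Q_{CS}^TA_C=0$ and $Q_{CS}^TS=0$) leaves $Q_{CS}^TA_V\xi_3=0$, whence $\xi_3=0$ by the second condition $\operatorname{ker}Q_{CS}^TA_V=\{0\}$ in (A5). Since $Q\xi=(Q_{CS}\xi_1,0,\xi_3)=0$, the identity $QE_1^{-1}\pi^TS=0$ follows. This is precisely the step where both topological index-1 conditions and the definiteness of the circuit matrices are genuinely used.

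Finally, for the converse I would take $(y,z)$ solving \eqref{2.yz}, set $x=y+z$, and note $Px=y$, $Qx=z$. Using $EP=E$ and $EQ=0$ (so that $E\frac{dz}{dt}=0$), together with $EE_1^{-1}=(E_1+AQ)E_1^{-1}=I+AQE_1^{-1}$, I compute $E\frac{dx}{dt}=E\frac{dy}{dt}=EE_1^{-1}(A_1y+F+s)=(A_1y+F+s)+AQE_1^{-1}(A_1y+s)$, where the $F$ contribution drops out by $QE_1^{-1}F=0$; since $AQE_1^{-1}(A_1y+s)=Az$, this equals $A_1y+Az+F+s=Ax+F+s$, recovering \eqref{2.dxdt}. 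The initial condition $x(0)=x^0$ holds exactly because of the consistency requirement (A6)/\eqref{2.consist}, which ensures $z(0)=QE_1^{-1}(A_1y^0+s(0))=Qx^0$ and hence $x(0)=y^0+z(0)=Px^0+Qx^0=x^0$.
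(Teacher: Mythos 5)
Your proof is correct and is exactly the standard index-1 decoupling that the paper itself outsources to \cite[Sec.~2.4]{AlRo21}: the identities $EQ=0$, $E_1P=E$, $E_1Q=-AQ$, the invertibility of $E_1$, and the block computation showing $QE_1^{-1}\pi^TS=0$ (the one genuinely new ingredient, where both topological conditions of (A5) and the positive definiteness of $C$, $M$, $R$, $L$ enter) are all verified correctly. Note only that you have, rightly, taken $Q$ to be the projector onto $\operatorname{ker}E$ --- consistent with the paper's displayed block form of $Q$, with $E_1=E-AQ$, and with the consistency condition \eqref{2.consist} --- even though the paper's verbal definition momentarily swaps the roles of $P$ and $Q$.
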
 

The lemma is proved in \cite[Sec.~2.4]{AlRo21}. The invertibility of $E_1$ follows directly from the index-1 conditions. The differential equation for $y$ in Lemma \ref{lem.decoupl} is decoupled from the algebraic equation for $z$ and can be solved independently from the latter equation. The initial values $y(0)=Px^0$ and $z(0)=Qx^0$ have to satisfy a compatibility condition, namely the consistency condition 
\begin{align}\label{2.consist}
  Qx^0 = z(0) = QE_1^{-1}(A_1y^0+s(0)) = QE_1^{-1}(A_1Px^0+s(0)),
\end{align} 
which is a condition on the initial data \eqref{1.ddic}, imposed in Assumption (A6).  

We define the boundary potential in terms of $y$. Since $Q_{CS}$ projects onto $\operatorname{ker}(A_C,S)^T$ by definition, we have $S^TQ_{CS}=0$ and hence $S^T\pi y=S^T\pi Px=S^T\pi(x-Qx)=S^T(u-Q_{CS}u)=S^Tu$. Consequently,
\begin{align*}
  V = V_{\rm bi} + u_D = V_{\rm bi} + S^Tu = V_{\rm bi} + S^T\pi y
  \quad\mbox{on }\Gamma_D.
\end{align*}

Finally, we prove that $P^TE_1P$ is symmetric and positive definite on $\operatorname{ker}Q$.

\begin{lemma}\label{lem.E1}
The matrix $P^TE_1P$ is symmetric and there exists $c>0$ such that $(Px)^T E_1Px\ge c|Px|^2$ for all $x\in\R^{n}$.
\end{lemma}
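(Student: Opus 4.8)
The plan is to compute $P^TE_1P$ explicitly in block form and to read off both assertions from the result. First I would substitute the block forms of $E$, $A$, and $Q$ into $E_1=E-AQ$, obtaining a block matrix whose $(1,1)$-corner is $A_CCA_C^T+SMS^T+A_RRA_R^TQ_{CS}$, with inductor and voltage-source couplings $-A_L^TQ_{CS}$, $-A_V^TQ_{CS}$, $A_V$ appearing off the diagonal. Writing $B:=A_CCA_C^T+SMS^T$ and using that $Q_{CS}$ projects onto $\operatorname{ker}(A_C,S)^T$, so that $A_C^TQ_{CS}=0$ and $S^TQ_{CS}=0$, I obtain the three identities that drive the whole computation: $BQ_{CS}=0$, $Q_{CS}^TB=0$ (from symmetry of $B$), and $Q_{CS}(I-Q_{CS})=0$ (idempotency).

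With $P=I-Q$, multiplying out $P^TE_1P$ and applying these identities makes every cross term collapse. The $(1,1)$-block $(I-Q_{CS}^T)\bigl(B+A_RRA_R^TQ_{CS}\bigr)(I-Q_{CS})$ reduces to $B$, since $BQ_{CS}=Q_{CS}^TB=0$ kill the $B$-cross terms and $Q_{CS}(I-Q_{CS})=0$ annihilates the resistor contribution; the coupling block $-A_L^TQ_{CS}(I-Q_{CS})$ vanishes for the same reason; and the third block-row and block-column are already zero. Hence $P^TE_1P=\operatorname{diag}\bigl(B,\,L,\,0\bigr)$. Because $C$, $M$, and $L$ are symmetric, both $B$ and $L$ are symmetric, so the symmetry of $P^TE_1P$ is immediate.

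For the coercivity estimate I would write $(Px)^TE_1Px=x^T(P^TE_1P)x=\tilde u^TB\tilde u+i_L^TLi_L$, where $\tilde u=(I-Q_{CS})u$, and note that $|Px|^2=|\tilde u|^2+|i_L|^2$ with $\tilde u\in\operatorname{ran}(I-Q_{CS})=\operatorname{ker}Q_{CS}$. The inductance matrix $L$ is positive definite, so $i_L^TLi_L\ge c_L|i_L|^2$. For the remaining term I would split $\tilde u^TB\tilde u=(A_C^T\tilde u)^TC(A_C^T\tilde u)+(S^T\tilde u)^TM(S^T\tilde u)\ge 0$, which vanishes only if $A_C^T\tilde u=0$ and $S^T\tilde u=0$, since $C$ and the matrix $M$ from \eqref{2.M} are positive definite; that is, only if $\tilde u\in\operatorname{ker}(A_C,S)^T=\operatorname{ran}Q_{CS}$. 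As $\operatorname{ker}Q_{CS}$ and $\operatorname{ran}Q_{CS}$ are complementary, the only such $\tilde u$ is $0$, so $B$ is positive definite on the finite-dimensional subspace $\operatorname{ker}Q_{CS}$. A continuity argument on the unit sphere of that subspace then produces $c_1>0$ with $\tilde u^TB\tilde u\ge c_1|\tilde u|^2$, and taking $c=\min\{c_1,c_L\}$ yields $(Px)^TE_1Px\ge c|Px|^2$.

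The main obstacle I anticipate is the bookkeeping in the block expansion of $P^TE_1P$: one must verify carefully that all of the resistor, inductor, and voltage-source couplings drop out, which hinges entirely on the projection identities $BQ_{CS}=0$ and $Q_{CS}(I-Q_{CS})=0$. Once the reduction to $\operatorname{diag}(B,L,0)$ is secured, symmetry is trivial and coercivity is routine, the only genuinely network-theoretic input being the positive definiteness of $M$ together with the complementarity of $\operatorname{ker}Q_{CS}$ and $\operatorname{ker}(A_C,S)^T$, which is built into the definition of the projector.
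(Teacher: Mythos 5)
Your proposal is correct and follows essentially the same route as the paper: compute $P^TE_1P$ explicitly in block form, reduce it to the block-diagonal matrix built from $A_CCA_C^T+SMS^T$ and $L$ (the paper obtains the symmetry slightly more abstractly from $QP=0$ and $E=E^T$ before doing the same block computation for $P^TEP$), and then invoke the positive definiteness of $C$, $L$, and $M$. One small point in your favour: the paper asserts that $A_CCA_C^T+SMS^T$ is positive definite on all of $\R^m$, which fails whenever $\operatorname{ker}(A_C,S)^T\neq\{0\}$ (i.e., whenever $Q_{CS}\neq 0$), whereas your argument correctly localizes the positive definiteness to $\operatorname{ker}Q_{CS}=\operatorname{ran}(I-Q_{CS})$, which is exactly what the final estimate requires.
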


\begin{proof}
It follows from $QP=0$ and the definition of $E$ that
\begin{align*}
  (P^TE_1P)^T = P^T(E-AQ)^TP = P^TEP - P^TQ^TA^TP = P^TEP = P^TE_1P,
\end{align*}
showing the symmetry of $P^TE_1P$. The matrix $P^TEP=(I-Q)^TE(I-Q)$ can be computed explicitly:
\begin{align*}
  P^TEP = \begin{pmatrix}
  (I-Q_{CS})^T(A_CCA_C^T+SMS^T)(I-Q_{CS}) & 0 & 0 \\
  0 & L & 0 \\
  0 & 0 & 0 \end{pmatrix}.
\end{align*}
Let $x=(x_u,x_L,x_V)^T\in\R^n$. The matrices $C$ and $L$ are diagonal with positive diagonal entries, so they are positive definite. Then, since $M$ is positive definite, we conclude that $A_CCA_C^T+SMS^T$ is positive definite. We infer that there exists $c>0$ such that
\begin{align*}
  (Px)^TE_1Px &= x^TP^TEPx 
  = x_u^T(I-Q_{CS})^T(A_CCA_C^T+SMS^T)(I-Q_{CS})x_u
  + x_L^TLx_L \\
  &\ge c(|(I-Q_{CS})x_u|^2 + |x_L|^2) = c|Px|^2,
\end{align*}
where the last step follows from $Px=((I-Q_{CS})x_u,-x_L,0)^T$.
\end{proof}

Lemma \ref{lem.E1} implies that the matrix $E_1 - \pi^TSMS^T\pi$ appearing in the free energy \eqref{1.H} is positive definite in the sense 
\begin{align*}
  y^T(E_1 - \pi^TSMS^T\pi)y \ge y^TE_1y = x^TP^TE_1Px
  \ge c|Px|^2 = c|y|^2 \quad\mbox{for }y=Px\in\R^n.
\end{align*}


\subsection{Precise model equations}

The full model consists of three parts. The first part are the memristor drift-diffusion equations \eqref{1.dd1}--\eqref{1.V} for the densities $n$, $p$, $D$ and the electric potential $V$, together with the initial and mixed boundary conditions \eqref{1.ddic}--\eqref{1.bcD}. In particular, the potential satisfies $V=\bar{V}= V_{\rm bi} + S^T\pi y$ on $\Gamma_D$, which couples the circuit to the device. The second part are the network equations \eqref{1.dae1}--\eqref{1.dae2} with the initial conditions \eqref{1.ic}, containing the coupling $I_M[J;u]$. In the proof, we work only with the decomposed system \eqref{2.yz} consisting of the projection $y=Px$, where $x=(u,i_L,i_V)^T$, and the algebraic part $z$, which depends on $y$ and $s(t)=(A_Ii_I(t),0,v_V(t))$. The third part are the coupling conditions. The circuit-to-device coupling is realized by 
\begin{align*}
  \bar{V}(t) = V_{\rm bi} + (S^Tu(t))_j 
  \quad\mbox{on }\Gamma_D^j,\ t>0,
\end{align*}
and the built-in potential $V_{\rm bi}$ is time-independent.
The device-to-circuit coupling is defined by 
\begin{align*}
  I_M[J;u] = SMS^T\frac{dy}{dt} + S\mathcal{I}[J;S^Tu],
\end{align*}
where $\mathcal{I}[J;S^Tu]$ is introduced in \eqref{2.IDj}.


\section{Proof of the main theorem}\label{sec.proof}

The proof is based on an approximate problem, the Leray--Schauder fixed-point theorem, and a compactness argument. More precisely, we solve first the approximate problem including a truncation operator. The free energy inequality yields a priori estimates uniform in the approximation parameter. These bounds are sufficient to apply the Aubin--Lions compactness lemma to pass to the limit of vanishing approximation parameter.

\subsection{Solution of an approximate drift-diffusion system}

We define the approximate problem by truncating the drift terms via $T_k(s)=\max\{0,\min\{k,s\}\}$ for $s\in\R$ and $k\in\N$. The approximate problem is then defined by
\begin{align}
  \pa_t n &= \diver(\na n - T_k(n)\na V), \label{3.nk} \\
  \pa_t p &= \diver(\na p + T_k(p)\na V), \label{3.pk} \\
  \pa_t D &= \diver(\na D + T_k(D)\na V), \label{3.Dk} \\
  \lambda^2\Delta V &= n-p-D+A(x) \quad\mbox{in }\Omega,\ t>0,
  \label{3.Vk} \\
  \frac{dy}{dt} &= PE_1^{-1}(A_1y + F[J;y] + s(t))
  \quad\mbox{for }t>0, \label{3.yk}
\end{align}
together with the initial and boundary conditions \eqref{1.ddic}--\eqref{1.bcD} and \eqref{1.ic} (i.e.\ $y(0)=Px^0$). The device-to-circuit coupling is realized by the current density $J=J_n+J_p+J_D$ in $F[J;y]$, and the circuit-to-device coupling appears in the time-dependent Dirichlet boundary condition for the electric potential via $\bar{V}(t)=V_{\rm bi}+S^T\pi y(t)$ on $\Gamma_D$. 

\begin{lemma}[Existence for the approximate system]\label{lem.approx}
There exist $q_0>2$ and a global strong solution $(n_k,p_k,D_k,V_k,y_k)$ to the approximate problem \eqref{3.nk}--\eqref{3.yk} with the initial and boundary conditions \eqref{1.ddic}--\eqref{1.bcD} and $y(0)=Px^0$ satisfing $n_k\ge 0$, $p_k\ge 0$, $D_k\ge 0$ in $\Omega_T$, and
\begin{align*}
  & n_k,\,p_k,\,D_k\in H^1(0,T;L^2(\Omega))\cap L^2(0,T;H^2(\Omega)), 
  \quad y_k\in H^1([0,T];\R^{m+n_L}), \\
  & V_k\in H^1(0,T;H^1(\Omega))\cap L^2(0,T;H^2(\Omega))\cap
  L^\infty(0,T;W^{1,q_0}(\Omega)).
\end{align*}
\end{lemma}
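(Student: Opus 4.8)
The plan is to prove Lemma~\ref{lem.approx} by a Leray--Schauder fixed-point argument, exploiting that the truncation $T_k$ makes all drift terms bounded (by $k$) so that the coupled system becomes, for each fixed $k$, a quasi-linear parabolic system with bounded lower-order coefficients. The fixed-point operator will take a provisional triple $(\bar n,\bar p,\bar D)$, construct the potential $V$ and the network variable $y$ from it, and then solve the three decoupled linear parabolic equations to return a new triple. I would set up the fixed-point map on the space $L^2(0,T;L^2(\Omega))^3$ (or $L^2(0,T;H^1(\Omega))^3$), which is where the Aubin--Lions compactness will give the required compact embedding.

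\medskip

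\emph{Construction of the fixed-point operator.} Given $(\bar n,\bar p,\bar D)\in L^2(\Omega_T)^3$ and a homotopy parameter $\sigma\in[0,1]$, I would first solve the algebraic-differential network equation \eqref{3.yk} for $y$. Because \eqref{3.yk} is an ODE whose right-hand side depends on the total current $J$ only through the functional $\mathcal I[J;S^Ty]$, and $J$ in turn depends on $(\bar n,\bar p,\bar D)$ and $V$, I would treat $J$ as a datum computed from the previous iterate and integrate the ODE in time; the Lipschitz dependence needed for local (hence, via a priori bounds, global) solvability comes from the linear structure of $\mathcal I$ in $J$. Next I would solve the Poisson problem \eqref{3.Vk} with Dirichlet datum $\bar V=V_{\rm bi}+S^T\pi y$ on $\Gamma_D$. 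Using the decomposition $V=V_A+\sum_k w_k u_D^k+\mathcal L[\bar n-\bar p-\bar D]$ from \eqref{2.V}, and Assumption (A7) applied to the elliptic problem \eqref{1.ellip}, I obtain $V\in L^2(0,T;H^2(\Omega))$, and elliptic $W^{1,q_0}$-regularity (Gr\"oger's theorem, cited after (A7)) gives the $L^\infty(0,T;W^{1,q_0}(\Omega))$ bound with $q_0>2$. Finally, with $V$ in hand and $T_k(\bar n),T_k(\bar p),T_k(\bar D)$ bounded by $k$, the drift coefficients $T_k(\bar n)\na V$ etc.\ lie in $L^2(\Omega_T)$ (indeed in $L^2(0,T;L^{q_0}(\Omega))$), so the three equations \eqref{3.nk}--\eqref{3.Dk} are \emph{linear} parabolic mixed-boundary problems with right-hand sides $\sigma\,\diver(T_k(\bar\cdot)\na V)$. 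Assumption (A7) for the parabolic problem \eqref{1.mixed} then yields unique strong solutions $(n,p,D)\in H^1(0,T;L^2(\Omega))\cap L^2(0,T;H^2(\Omega))$, and I define the fixed-point map $\Phi_\sigma(\bar n,\bar p,\bar D)=(n,p,D)$.

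\medskip

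\emph{The fixed-point hypotheses.} Continuity and compactness of $\Phi_\sigma$ follow from the linear $H^2$-parabolic estimate (A7): a bounded set of data maps to a set bounded in $H^1(0,T;L^2)\cap L^2(0,T;H^2)$, which embeds compactly into $L^2(\Omega_T)$ by Aubin--Lions, giving compactness; continuity in $(\bar n,\bar p,\bar D)$ and joint continuity in $\sigma$ follow from the linearity of each subproblem and continuous dependence of $V$ and $y$ on the iterate. For $\sigma=0$ the map is constant (the solution is the $V$-independent heat flow), so $\Phi_0$ has the obvious fixed point. The crucial remaining hypothesis is the \textbf{uniform a priori bound} on all fixed points of $\Phi_\sigma$, independent of $\sigma\in[0,1]$. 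Here I would test each truncated equation with $1$ to preserve nonnegativity (via the test function $(n)_-$, using that $T_k(s)=0$ for $s\le 0$, giving $n,p,D\ge0$), and then derive an $L^2$-in-time $H^1$-in-space bound. Because $T_k(\cdot)\le k$, the drift term is controlled crudely by $k\|\na V\|_{L^2}$, and $\|\na V\|$ is controlled by $\|n-p-D\|_{H^{-1}}+\|\bar V\|$, closing a Gronwall estimate that is allowed to depend on $k$ (uniformity in $k$ is \emph{not} needed at this stage --- it is obtained later from the free energy). This yields a ball in $L^2(\Omega_T)^3$ absorbing all fixed points, and the Leray--Schauder theorem produces a fixed point of $\Phi_1$, i.e.\ a solution of \eqref{3.nk}--\eqref{3.yk}.

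\medskip

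\emph{Main obstacle.} The technically delicate point is the treatment of the time-dependent inhomogeneous Dirichlet datum $\bar V(t)=V_{\rm bi}+S^T\pi y(t)$ for $V$ and the resulting regularity of $\pa_t V$ and of the coupling current $\mathcal I[J;S^Ty]$: the network ODE \eqref{3.yk} feeds $dy/dt$ into the displacement current, while $\pa_t V$ feeds back into $J$, so the coupling is genuinely implicit in the time derivatives. I expect to disentangle this using the decoupled form \eqref{2.yz} and the formulation \eqref{2.IDj} of $I_D^j$ as a weighted sum of $du_D^k/dt$ and the functional $\mathcal I_j[J;u_D]$ that depends on $J$ only through $L^2$-regular quantities (not boundary traces), precisely the advantage emphasized after \eqref{2.IDj}. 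Establishing that $y\mapsto\mathcal I[J;S^Ty]$ together with \eqref{3.yk} defines a well-posed, globally solvable ODE --- with the $H^1(0,T)$ regularity of $y_k$ claimed in the statement --- is the step I would spend the most care on; everything else reduces to linear parabolic/elliptic theory supplied by Assumption (A7) plus standard Aubin--Lions compactness.
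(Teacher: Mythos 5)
Your overall strategy is the paper's: truncate, linearize the drift coefficients at the previous iterate, solve the Poisson problem and the network ODE from the iterate, solve the resulting linear parabolic mixed problems, get compactness from Aubin--Lions, prove a $\sigma$-uniform (but $k$-dependent) Gronwall bound for fixed points, and recover nonnegativity with the negative-part test function. Two points, however, need repair.

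First, as written your fixed-point map is circular. You put only $(\bar n,\bar p,\bar D)$ in the iteration space and propose to solve the ODE \eqref{3.yk} for $y$ \emph{first}, treating $J$ as ``a datum computed from the previous iterate.'' But $J$ contains $\na V$, the Dirichlet datum for $V$ is $V_{\rm bi}+S^T\pi y$, and the right-hand side of \eqref{3.yk} contains $J$: you cannot compute $y$ before $V$, nor $V$ before $y$, nor $J$ before both. The paper breaks this loop by enlarging the fixed-point space to $L^2(\Omega_T;\R^3)\times L^2(0,T;\R^{m+n_L})$, i.e.\ by including a provisional $y^*$ in the iterate: $V$ is then solved with boundary datum $V_{\rm bi}+S^T\pi Py^*$, the parabolic equations give $(n,p,D)$ and hence $J$, and the ODE for the new $y$ has a $y$-independent right-hand side (evaluated at $y^*$) and is solved by plain time integration. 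You should adopt this enlargement; it also makes the ``global solvability of the ODE'' a non-issue at the level of the operator, shifting all the work to the a priori estimate.

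Second, the $\sigma$-uniform bound on $|y(t)|$ is precisely the genuinely coupled estimate, and you defer it rather than carry it out. The paper's argument uses the Lyapunov quantity $\tfrac12 y^TE_1y$ together with the symmetry and positive definiteness of $P^TE_1P$ on $\operatorname{ran}P$ (Lemma \ref{lem.E1}) to control $|y(t)|^2$, and then bounds the coupling functional by $|\mathcal{I}_j[J;S^T\pi y]|\le C\|J\|_{L^2(\Omega)}$ via the estimate $\|\na\mathcal{L}[\operatorname{div}J]\|_{L^2(\Omega)}\le\lambda^{-2}\|J\|_{L^2(\Omega)}$, obtained by testing \eqref{2.L} with $\mathcal{L}[\operatorname{div}J]$ itself; here the $H^2$ regularity of Assumption (A7) is needed to make $\operatorname{div}J\in L^2(\Omega_T)$ and to justify the integration by parts. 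The resulting gradient terms $\|\na n\|_{L^2}^2+\|\na p\|_{L^2}^2+\|\na D\|_{L^2}^2$ in the bound for $|y(t)|^2$ are then absorbed into the parabolic energy estimates by adding the inequalities with suitable weights before applying Gronwall. Without this chain your Gronwall argument does not close, because the $y$-dependence of the boundary datum $\bar V$ feeds $|y(t)|^2$ into the estimate for $\|\na V\|_{L^2}$ and hence into the density estimates.
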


\begin{proof}
The proof is similar to the proof of \cite[Lemma 2.1]{JJZ23}, but we need to include the differential equation for $y_k$ and the coupling conditions. Therefore, we present a full proof.

{\em Step 1: Definition of the fixed-point operator.}
Let $\sigma\in[0,1]$ and $(n^*,p^*,D^*,y^*)\in L^2(\Omega_T;\R^3)\times L^2(0,T;\R^{m+n_L})$ be given. We define the linearized approximate problem 
\begin{align}
  \pa_t n &= \diver(\na n - \sigma T_k(n^*)\na V), \label{3.nstar} \\
  \pa_t p &= \diver(\na p + \sigma T_k(p^*)\na V), \label{3.pstar} \\
  \pa_t D &= \diver(\na D + \sigma T_k(D^*)\na V), \label{3.Dstar} \\
  \lambda^2\Delta V &= n^*-p^*-D^*+A(x) \quad\mbox{in }\Omega,\ t>0,
  \label{3.Vstar} \\
  \frac{dy}{dt} &= \sigma PE_1^{-1}(A_1y^* + F[J;y^*] + s(t))
  \quad\mbox{for }t>0, \label{3.ystar}
\end{align}
with the initial and boundary conditions
\begin{align}
  n(0)=\sigma n^0,\ p(0)=\sigma p^0,\ D(0)=\sigma D^0
  &\quad\mbox{in }\Omega, \label{3.icstar} \\
  n=\sigma\bar{n},\quad p=\sigma\bar{p}, \quad V=\bar{V}^* 
  &\quad\mbox{on }\Gamma_D,\ t>0, \label{3.bc1star} \\
  \na n\cdot\nu = \na p\cdot\nu = \na V\cdot\nu = 0
  &\quad\mbox{on }\Gamma_N,\ t>0, \label{3.bc2star} \\
  \na D\cdot\nu = 0 &\quad\mbox{on }\pa\Omega,\ t>0, \label{3.bcD} \\
  y(0)= \sigma Px^0, & \nonumber 
\end{align}
where $\bar{V}^* = V_{\rm bi}+S^T\pi Py^*$. Here, we have
\begin{align*}
  J &= (\na n - \sigma T_k(n^*)\na V) + (\na p + \sigma T_k(p^*)\na V)
  + (\na D + \sigma T_k(D^*)\na V), \\
  F[J;y] &= -\pi^T S\mathcal{I}[J;S^T\pi Py]
  = -\pi^TS\bigg(\int_\Omega\na w_j\cdot\big(J-\lambda^2
  \na\mathcal{L}[\diver J]\big)dx\bigg)_{j=1,2}.
\end{align*}

There exists a unique weak solution $V$ to \eqref{3.Vstar} with the boundary conditions for $V$ in \eqref{3.bc1star} and \eqref{3.bc2star}. Then, by \cite[Theorem 23.A]{Zei90}, the linear parabolic equations \eqref{3.nstar}--\eqref{3.Dstar} with the associated initial and boundary conditions possesses a unique weak solution $(n,p,D)$. Thanks to the truncation, we have $J\in L^2(\Omega_T)$ such that $F[J;y]$ is well-defined. The solution $y\in H^1(0,T)$ is obtained by simply integrating the differential equation \eqref{3.ystar} with $y$-independent right-hand side over $(0,t)$. This defines the fixed-point operator
\begin{align*}
  & S:L^2(\Omega_T;\R^3)\times L^2(0,T;\R^{m+n_L})\times[0,1]\to 
  L^2(\Omega_T;\R^3)\times L^2(0,T;\R^{m+n_L}), \\
  & (n^*,p^*,D^*,y^*;\sigma)\mapsto (n,p,D,y).
\end{align*}
It holds that $S(n^*,p^*,D^*,y^*;0)=0$, and standard arguments show that $S$ is continuous. As the range of $S$ is contained in 
\begin{align*}
  \big(L^2(0,T;H^1(\Omega;\R^3))\cap H^1(0,T;H^{-1}_D(\Omega;\R^3))\big)
  \times H^1(0,T),
\end{align*}
the Aubin--Lions lemma implies that $S$ is a compact operator. It remains to derive uniform estimates for any fixed point of $S(\cdot,\cdot,\cdot;\sigma)$. 

{\em Step 2: Uniform estimates.} Let $(n,p,D,y)\in L^2(\Omega_T;\R^3)\times L^2(0,T;\R^{m+n_L})$ be a fixed point of $S(\cdot,\cdot,\cdot;\sigma)$. Using the test function $V-\bar{V}$ in the weak formulation of \eqref{3.Vstar} with $(n^*,p^*,D^*)=(n,p,D)$ yields, after standard estimations, that
\begin{align}\label{3.estV}
  \|\na V\|_{L^2(\Omega)}^2 \le C\big(1 + \|n\|_{L^2(\Omega)}^2
  + \|p\|_{L^2(\Omega)}^2 + \|D\|_{L^2(\Omega)}^2 + |y(t)|^2\big).
\end{align}
The dependence on $y(t)$ comes from the boundary condition $\bar{V} = V_{\rm bi}+S^T\pi Py(t)$. The test function $n-\sigma \bar{n}$ in the weak formulation of \eqref{3.nstar} with $n^*=n$ gives, again after standard computations,
\begin{align}\label{3.estn}
  \|n&(t)\|_{L^2(\Omega)}^2 + \int_0^t\|\na n\|_{L^2(\Omega)}^2 d\tau \\
  &\le C_1 + C_1\int_0^t\big(\|n\|_{L^2(\Omega)}^2 
  + \|p\|_{L^2(\Omega)}^2 + \|D\|_{L^2(\Omega)}^2 
  + |y(\tau)|^2\big)d\tau. \nonumber 
\end{align}
The constant $C_1>0$ depends on $k$ but not on $\sigma$. Similar estimates, with the same constant $C_1$, hold for $p$ and $D$. 

We wish to estimate $|y(t)|^2$. For this, 
using $Py=P^2x=Px=y$ and the symmetry of $P^TE_1P$ (see Lemma \ref{lem.E1}), we compute
\begin{align*}
  \frac12\frac{d}{dt}(y^TE_1y) &= \frac12\frac{d}{dt}\big(
  y^T(P^TE_1P)y\big) = y^T(P^TE_1P)\frac{dy}{dt}
  = y^TE_1\frac{dy}{dt} \\
  &= \sigma y^TE_1PE_1^{-1}\big(A_1y + F[J;y] + s(t)\big).
\end{align*}
We integrate this equation over $(0,t)$ and use the positive definiteness of $P^TE_1P$ on a subspace (see Lemma \ref{lem.E1}):
\begin{align}\label{3.y2}
  |y(t)|^2 &= |Px(t)|^2 \le Cx(t)^T P^TE_1Px(t) = Cy(t)^TE_1y(t) \\
  &\le Cy(0)^TE_1y(0) + C\int_0^t|y(\tau)|^2d\tau 
  + C\int_0^t |y(\tau)||F[J(\tau),y(\tau)]|d\tau \nonumber \\
  &\phantom{xx}+ C\int_0^t |y(\tau)||s(\tau)|d\tau \nonumber \\
  &\le C\sigma^2 x^0P^TE_1Px^0 + C\int_0^t|y(\tau)|^2 d\tau
  + C\int_0^t|\pi^TS\mathcal{I}[J(\tau);S^T\pi y(\tau)]|^2d\tau 
  \nonumber \\
  &\phantom{xx}+ C\int_0^t |y(\tau)||s(\tau)|d\tau \nonumber \\
  &\le C(x^0,s) + C\int_0^t|y(\tau)|^2 d\tau
  + C\int_0^t|\mathcal{I}[J(\tau);S^T\pi y(\tau)]|^2 d\tau. \nonumber 
\end{align}

To estimate $I[J,S^T\pi y]$, we need the $H^2(\Omega)$ regularity for linear parabolic problems, supposed in Assumption (A7), and the $W^{1,q_0}(\Omega)$ regularity for linear elliptic problems from \cite{Gro89}. This yields $V\in L^\infty(0,T;W^{1,q_0}(\Omega))$ and $n\in L^2(0,T;H^2(\Omega))\hookrightarrow L^2(0,T;W^{1,q}(\Omega))$ for any $q<\infty$ (since $d\le 2$), and therefore $\na T_k(n)\cdot\na V\in L^2(\Omega_T)$. With $J_n^k = \na n-T_k(n)\na V$, this shows that
\begin{align*}
  \diver J_n^k = \Delta n - \na T_k(n)\cdot\na V - T_k(n)\Delta V
  \in L^2(\Omega_T).
\end{align*}
In particular, we can interpret the no-flux boundary condition $J_n\cdot\nu=0$ on $\Gamma_N$ in the sense of the dual of the Lions--Magenes space $H_{00}^{1/2}(\Gamma_N)'$ \cite[Sec.~18]{BaCa84}. We use $f(t):=\mathcal{L}[\diver J_n^k(t)]\in H_D^1(\Omega)$ for a.e.\ $t\in(0,T)$ as a test function in the weak formulation of \eqref{2.L} and integrate by parts \cite[Theorem 18.9]{BaCa84} (here we need $\diver J_n\in L^2(\Omega_T)$):
\begin{align*}
  \lambda^2\int_\Omega|\na f(t)|^2 dx
  &= -\int_\Omega f(t)\diver J_n^k dx
  = \int_\Omega\na f(t)\cdot J_n^k dx \\
  &\le \frac{\lambda^2}{2}\int_\Omega|\na f(t)|^2 dx
  + \frac{1}{2\lambda^2}\int_\Omega |J_n^k(t)|^2 dx.
\end{align*}
This gives
\begin{align*}
  \|\na\mathcal{L}[\diver J_n^k(t)]\|_{L^2(\Omega)}
  \le \lambda^{-2}\|J_n^k(t)\|_{L^2(\Omega)}.
\end{align*}
Analogous inequalities hold for $J_p^k=-(\na p+T_k(p)\na V)$ and $J_D^k=-(\na D+T_k(D)\na V)$. By Definition \eqref{2.IDj} of $\mathcal{I}$ and recalling that $J=J_n^k+J_p^k+J_D^k$, 
\begin{align}\label{2.Ij}
  \int_0^t\big|\mathcal{I}_j[J;S^T\pi y]\big|^2 d\tau
  &\le \int_0^t\bigg|\int_\Omega 
  \na w_j\cdot(J-\lambda^2\na\mathcal{L}[\diver J])dx\bigg|^2 d\tau \\
  &\le C\|\na w_j\|_{L^2(\Omega)}^2\int_0^t\|J\|_{L^2(\Omega)}^2 d\tau
  \le C\int_0^t\|J\|_{L^2(\Omega)}^2 d\tau, \nonumber 
\end{align}
By estimate \eqref{3.estV} for $\na V$, we have
\begin{align*}
  \int_0^t\big|\mathcal{I}_j[J;S^T\pi y]\big|^2 d\tau
  &\le C\int_0^t\big(\|\na n\|_{L^2(\Omega)}^2 
  + \|\na p\|_{L^2(\Omega)}^2 + \|\na D\|_{L^2(\Omega)}^2 
  + \|\na V\|_{L^2(\Omega)}^2\big)d\tau \\
  &\le C\int_0^t\big(\|\na n\|_{L^2(\Omega)}^2 
  + \|\na p\|_{L^2(\Omega)}^2 + \|\na D\|_{L^2(\Omega)}^2\big)d\tau \\
  &\phantom{xx}+ C\int_0^t\big(\|n\|_{L^2(\Omega)}^2
  + \|p\|_{L^2(\Omega)}^2 + \|D\|_{L^2(\Omega)}^2 + |y(t)|^2\big)d\tau,
\end{align*} 
We conclude from \eqref{3.y2} that
\begin{align}\label{3.esty}
  |y(t)|^2 &\le C(x^0,s) 
  + C_2\int_0^t\big(\|\na n\|_{L^2(\Omega)}^2 
  + \|\na p\|_{L^2(\Omega)}^2 + \|\na D\|_{L^2(\Omega)}^2\big)d\tau \\
  &\phantom{xx}+ C_2\int_0^t\big(\|n\|_{L^2(\Omega)}^2
  + \|p\|_{L^2(\Omega)}^2 + \|D\|_{L^2(\Omega)}^2 
  + |y(\tau)|^2\big)d\tau. \nonumber 
\end{align}
We multiply \eqref{3.estn} for $n$, and the corresponding estimates for $p$ and $D$, by $2C_2$ and add them to \eqref{3.esty}. Then the gradient terms in \eqref{3.esty} can be absorbed, and we end up with
\begin{align*}
  2&C_2\big(\|n(t)\|_{L^2(\Omega)}^2 + \|p(t)\|_{L^2(\Omega)}^2 
  + \|D(t)\|_{L^2(\Omega)}^2\big) + |y(t)|^2 \\
  &\phantom{xx}+ C_2\int_0^t\big(\|\na n\|_{L^2(\Omega)}^2 
  + \|\na p\|_{L^2(\Omega)}^2 + \|\na D\|_{L^2(\Omega)}^2\big)d\tau \\
  &\le 2C_1C_2 + C(x^0,s) 
  + (1+2C_1)C_2\int_0^t\big(\|n\|_{L^2(\Omega)}^2 
  + \|p\|_{L^2(\Omega)}^2 + \|D\|_{L^2(\Omega)}^2 
  + |y(\tau)|^2\big)d\tau.
\end{align*}
Gronwall's inequality shows that $(n,p,D)$ is uniformly bounded in $L^\infty(0,T;L^2(\Omega))$ (and thus also in $L^2(\Omega_T)$) and $y$ is uniformly bounded in $L^\infty(0,T;\R^{m+n_L})$. This gives the desired bounds (uniform in $\sigma$), and the Leray--Schauder fixed-point theorem shows the existence of a weak solution to \eqref{3.nk}--\eqref{3.yk} with the corresponding initial and boundary conditions. 

It remains to verify the nonnegativity of $n$, $p$, and $D$. This is done by using $n^-:=\min\{0,n\}$ as an admissible test function in the weak formulation of \eqref{3.nk}: 
\begin{align*}
  \frac12\frac{d}{dt}\int_\Omega |n(t)^-|^2dx
  + \int_\Omega|\na n^-|^2 dx 
  = \int_\Omega T_k(n)\mathrm{1}_{\{n<0\}}\na V\cdot\na n dx = 0,
\end{align*}
from which we infer that $n(t)^-=0$ and hence $n(t)\ge 0$ in $\Omega$. Similar arguments show that $p(t)\ge 0$ and $D(t)\ge 0$ in $\Omega$ for $t>0$. This finishes the proof.
\end{proof}


\subsection{Approximate free energy inequality}

The estimates derived in the proof of Lemma \ref{lem.approx} for $(n_k,p_k,D_k,V_k,y_k)$ depend on the truncation parameter $k$ (the constant $C_1$ depends on $k$). We derive bounds uniform in $k$ from the free energy $H$, defined in \eqref{1.H}. As the densities are only nonnegative, according to Lemma \ref{lem.approx}, we cannot use $\log n_k$ as a test function, and we need to regularize the free energy functional. To this end, we introduce for $k\in\N$ and $\alpha>0$ the auxiliary functions
\begin{align*}
  g_\alpha^k(u) &= \int_0^u\int_1^v\frac{dwdv}{T_k(w)+\alpha} 
  \quad\mbox{for }u\ge 0, \\
  G_\alpha^k(u|\bar{u}) &= g_\alpha^k(u) - g_\alpha^k(\bar{u})
  - (g_\alpha^k)'(\bar{u})(u-\bar{u})\quad\mbox{for }u,\bar{u}\ge 0, \\
  h_\alpha^k(u) &= \int_0^u\frac{dw}{\sqrt{T_k(w)+\alpha}}
  \quad\mbox{for }u\ge 0.
\end{align*}
The function $g_\alpha^k(u)$ approximates $u(\log u-1)$, the Bregman distance $G_\alpha^k(u|\bar{u})$ approximates $u\log(u/\bar{u})-u+\bar{u}$, and $h_\alpha^k(u)$ approximates $2\sqrt{u}$. The auxiliary functions are defined in such a way that the following chain rules hold:
\begin{equation}\label{3.chain}
\begin{aligned}
  \na (g_\alpha^k)'(n_k) &= (g_\alpha^k)''(n_k)\na n_k
  = \frac{\na n_k}{T_k(n_k)+\alpha}, \\
  \na h_\alpha^k(n_k) &= (h_\alpha^k)'(n_k)\na n_k
  = \frac{\na n_k}{\sqrt{T_k(n_k)+\alpha}}.
\end{aligned}
\end{equation}
The regularized free energy is then defined by
\begin{align*}
  & H_\alpha^k[n,p,D,V,y] = H_{\alpha,1}^k + H_{\alpha,2}^k
  + H_{\alpha,3}^k, \quad\mbox{where} \\
  & H_{\alpha,1}^k = \int_\Omega\big(G_\alpha^k(n|\bar{n})
  + G_\alpha^k(p|\bar{p}) + G_\alpha^k(D|\bar{D})\big)dx, \\
  & H_{\alpha,2}^k = \frac{\lambda^2}{2}\int_\Omega
  |\na(V-V_A)|^2dx, \\
  & H_{\alpha,3}^k = \frac12 y^T(E_1-\pi^TSMS^T\pi)y.
\end{align*}

\begin{lemma}[Regularized free energy inequality I]
Let $(n_k,p_k,D_k,V_k,y_k)$ be a solution to the approximate problem \eqref{3.nk}--\eqref{3.yk} with the corresponding initial and boundary conditions. Then there exists a constant $C>0$ independent of $\alpha$ and $k$ such that
\begin{align*}
  H_\alpha^k&[(n_k,p_k,D_k,V_k,y_k)(t)]
  + \frac12\int_0^t\int_\Omega\big|\na h_\alpha^k(n_k)
  - \sqrt{T_k(n_k)+\alpha}\na V_k\big|^2 dxd\tau \\
  &+ \frac12\int_0^t\int_\Omega\big|\na h_\alpha^k(p_k)
  + \sqrt{T_k(p_k)+\alpha}\na V_k\big|^2 dxd\tau \\
  &+ \frac12\int_0^t\int_\Omega\big|\na h_\alpha^k(D_k)
  + \sqrt{T_k(D_k)+\alpha}\na V_k\big|^2 dxd\tau
  \le C + CH_\alpha^k[n^0,p^0,D^0,Px^0].
\end{align*}
\end{lemma}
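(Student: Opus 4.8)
The plan is to differentiate the three pieces $H_{\alpha,1}^k$, $H_{\alpha,2}^k$, $H_{\alpha,3}^k$ of the regularized free energy along the solution and to organize the terms so that, for each species, diffusion and drift combine into the corresponding square on the left-hand side, while the contributions generated by the time-dependent Dirichlet datum $\bar V=V_{\rm bi}+S^T\pi y$ for $V_k$ and by the network equation cancel each other. For the internal energy I would test the weak formulations of \eqref{3.nk}--\eqref{3.Dk} with $\phi_n=(g_\alpha^k)'(n_k)-(g_\alpha^k)'(\bar n)$, and the analogous $\phi_p,\phi_D$. These are admissible because $\phi_n,\phi_p\in H_D^1(\Omega)$ (they vanish on $\Gamma_D$, where $n_k=\bar n$, $p_k=\bar p$), while for $D_k$ the no-flux condition $J_D^k\cdot\nu=0$ holds on all of $\partial\Omega$. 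By the chain rule \eqref{3.chain}, $\nabla\phi_n=\nabla n_k/(T_k(n_k)+\alpha)-\nabla(g_\alpha^k)'(\bar n)$, so $\int\phi_n\,\partial_t n_k=-\int_\Omega\nabla\phi_n\cdot J_n^k\,dx$ carries no boundary term.

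The derivative of the electric energy is where the potential enters. Using $\partial_t V_A=0$, the Poisson equation \eqref{3.Vk}, and integration by parts, one finds $\tfrac{d}{dt}H_{\alpha,2}^k=-\int_\Omega(V_k-V_A)\,\partial_t(n_k-p_k-D_k)\,dx+\lambda^2\int_{\Gamma_D}(S^T\pi y)\,\partial_\nu\partial_t V_k\,ds$. Adding this to the internal-energy terms promotes $\phi_n$ to the full chemical potential $\mu_n=\phi_n-(V_k-V_A)$ (and $\phi_p+(V_k-V_A)$, $\phi_D+(V_k-V_A)$), so the combined bulk contribution for the electrons is $-\int_\Omega\nabla\mu_n\cdot J_n^k\,dx$. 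Writing $J_n^k=\sqrt{T_k(n_k)+\alpha}\,W_n+\alpha\nabla V_k$ with $W_n:=\nabla h_\alpha^k(n_k)-\sqrt{T_k(n_k)+\alpha}\,\nabla V_k$, the leading part equals $-\int_\Omega|W_n|^2\,dx$, and similarly for $p_k,D_k$; these are exactly the three squares in the statement, plus an $O(\alpha)$ remainder and a reference-profile correction $-\int_\Omega(\nabla V_A-\nabla(g_\alpha^k)'(\bar n))\cdot J_n^k\,dx$. The boundary integrals over $\Gamma_D$ produced when integrating $(V_k-V_A)\,\partial_t(n_k-p_k-D_k)$ by parts combine with the displacement term $\lambda^2\int_{\Gamma_D}(S^T\pi y)\,\partial_\nu\partial_t V_k$; the displacement contributions cancel, and by the definition \eqref{2.IDj} of the terminal currents the net device-side boundary contribution is precisely $u_D^T I_D=(S^T\pi y)^T I_D$.

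I would then differentiate the network energy $H_{\alpha,3}^k=\tfrac12 y^T(E_1-\pi^TSMS^T\pi)y$ along \eqref{3.yk}, following the computation that led to \eqref{3.y2} and using the symmetry and coercivity of $P^TE_1P$ from Lemma~\ref{lem.E1}. Since $QE_1^{-1}F[J;y]=0$ by Lemma~\ref{lem.decoupl}, the force term collapses to $y^TF[J;y]=-u_D^T\mathcal{I}[J;u_D]$, while the $\pi^TSMS^T\pi$-block contributes $-u_D^TM\,\tfrac{du_D}{dt}$; by \eqref{2.IDj} their sum is exactly $-u_D^T I_D$. Thus the device-side coupling $+u_D^T I_D$ and the network-side coupling $-u_D^T I_D$ cancel identically, which is the precise meaning of the statement that the dissipation of the coupling conditions vanishes. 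I expect this cancellation to be the main obstacle: it requires the $H^2(\Omega)$-regularity of Assumption (A7) to make the normal trace $J\cdot\nu$ on $\Gamma_D$ rigorous (so that $\operatorname{div}J_n^k,\operatorname{div}J_p^k,\operatorname{div}J_D^k\in L^2(\Omega_T)$ and the integrations by parts are licit), and it rests on the algebraic identities for $M$, $\mathcal{I}$ and the projectors in \eqref{2.IDj}, \eqref{2.IM}, and Lemma~\ref{lem.decoupl}. What remains from $H_{\alpha,3}^k$ is the purely network part $y^TE_1PE_1^{-1}(A_1y+s(t))$, which is bounded by $C(1+|y|^2+|s(t)|^2)$ with $s$ continuous by (A2).

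Finally I would estimate the leftover bulk terms uniformly in $\alpha$ and $k$. The $O(\alpha)$ pieces are harmless since $T_k(\cdot)+\alpha\ge\alpha$; the reference-profile correction and the quadratic drift term $\int_\Omega(T_k(n_k)+\alpha)|\nabla V_k|^2\,dx\sim\int_\Omega n_k|\nabla V_k|^2\,dx$ (which surfaces when controlling $\int_\Omega|\nabla h_\alpha^k(n_k)|^2\,dx$ against $\int_\Omega|W_n|^2\,dx$) are the only delicate ones. Here I would use the $L^\infty(0,T;W^{1,q_0}(\Omega))$-bound on $V_k$ from Lemma~\ref{lem.approx} together with a logarithmic-type Gagliardo--Nirenberg inequality --- the step forcing the restriction to $d=2$ --- to dominate these terms by $\tfrac12\int_\Omega(|W_n|^2+|W_p|^2+|W_D|^2)\,dx+C(1+H_\alpha^k)$; the sacrificed half of the dissipation is the origin of the coefficient $\tfrac12$ in the statement. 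The term from the time dependence of $\bar D=\exp(-\bar V)$, namely $\partial_t\bar D=-\bar D\,S^T\pi\,\tfrac{dy}{dt}$, is bounded by $C(1+H_\alpha^k)\,|\tfrac{dy}{dt}|$ with $\tfrac{dy}{dt}\in L^2(0,T)$ by \eqref{3.yk}. Integrating over $(0,t)$, using the bounds on $\bar n,\bar p,\bar D$ and their gradients from (A4) and the boundedness of $y$ from Lemma~\ref{lem.approx}, and applying Gronwall's lemma yields the asserted inequality with a constant $C$ depending only on $T$ and independent of $\alpha$ and $k$.
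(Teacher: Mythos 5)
Your overall architecture coincides with the paper's: differentiate the three pieces of $H_\alpha^k$, recognize the boundary term produced by the time-dependent Dirichlet datum in $dH_{\alpha,2}^k/dt$ as $u_D^k\cdot I_D^k$, and cancel it against $dH_{\alpha,3}^k/dt$ using $S^T\pi Q=0$, $QE_1^{-1}F=0$, and the identity $u_D^k\cdot I_D^k=x_k^T\pi^TSMS^T\pi\,\frac{dy_k}{dt}+y_k^TF[J_k;y_k]$. That cancellation, the use of (A7) to make the trace of $\pa_t\na V_k$ and the integrations by parts rigorous, and the completion-of-squares producing the three $|W|^2$ terms are exactly the paper's steps.

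The genuine problem is your closing estimate. You propose to control the leftover ``quadratic drift term'' $\int_\Omega(T_k(n_k)+\alpha)|\na V_k|^2\,dx$ by combining the $L^\infty(0,T;W^{1,q_0}(\Omega))$ bound on $V_k$ from Lemma \ref{lem.approx} with a logarithmic Gagliardo--Nirenberg inequality. First, the bounds of Lemma \ref{lem.approx} are \emph{not} uniform in $k$ (the constant $C_1$ there depends on the truncation level), so any estimate built on them cannot yield the $k$-independent constant the lemma asserts; this step as written fails. Second, it is unnecessary: after Young's inequality the term $(T_k(n_k)+\alpha)|\na V_k|^2$ sits \emph{inside} the square $|W_n|^2$ and is never isolated; the only residues are $C\int_\Omega|\na(V_k-V_A)|^2dx=\tfrac{2C}{\lambda^2}H_{\alpha,2}^k$, the constant $C\int_\Omega|\na V_A|^2dx$, and $C\int_\Omega(T_k(n_k)+T_k(p_k)+T_k(D_k))dx$, the last being bounded by $CH_\alpha^k+C$ through the elementary inequality behind \cite[(2.9)]{JJZ23} (essentially $u\le CG_\alpha^k(u|\bar u)+C$), with no Sobolev interpolation at all. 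The factor $\tfrac12$ in front of the dissipation comes from Young's inequality applied to the reference-profile cross terms $J\cdot\na((g_\alpha^k)'(\bar n)-V_A)$ etc., not from sacrificing dissipation to a Gagliardo--Nirenberg step; the logarithmic inequality and the restriction $d\le2$ enter only later, in the uniform $L^2$ bounds of Lemma \ref{lem.L2}. A smaller remark: your $\pa_t\bar D$ term is a legitimate concern (the paper quotes the computation of \cite{JJZ23} where the reference data are time-independent), but bounding it by $C(1+H_\alpha^k)|\frac{dy_k}{dt}|$ and invoking $\frac{dy_k}{dt}\in L^2(0,T)$ again borrows a bound that is only established $k$-uniformly \emph{after} this lemma; you would need to control $|\frac{dy_k}{dt}|$ by $C(1+|y_k|+\|J_k\|_{L^2(\Omega)})$ and absorb the flux part into the dissipation to keep the argument non-circular.
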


\begin{proof}
We introduce the current densities
\begin{align*}
  J_n^k = \na n_k - T_k(n_k)\na V_k, \quad
  J_p^k = -(\na p_k+T_k(p_k)\na V_k), \quad
  J_D^k = -(\na D_k+T_k(D_k)\na V_k).
\end{align*}
The first term $H_{\alpha,1}^k$ can be computed as in the proof of \cite[Lemma 6]{JJZ23}, giving
\begin{align}\label{3.H1}
  \frac{dH_{\alpha,1}^k}{dt} &= -\int_\Omega\Big(
  J_n^k\cdot\big(\na(g_\alpha^k)'(n_k) - \na(g_\alpha^k)'(\bar{n})\big)
  - J_p^k\cdot\big(\na(g_\alpha^k)'(p_k) 
  - \na(g_\alpha^k)'(\bar{p})\big) \\
  &\phantom{xx}- J_D^k\cdot\big(\na(g_\alpha^k)'(D_k) 
  - \na(g_\alpha^k)'(\bar{D})\big)\Big)dx. \nonumber 
\end{align}
The calculation of $H_{\alpha,2}^k$ is different from that one in the proof of \cite[Lemma 6]{JJZ23}, since we need to include the time-dependent Dirichlet boundary data. Observe that, in view of Assumption (A7), $\pa_t\na V_k(t)\in H^1(\Omega)$ and therefore $\pa_t\na V_k(t)\cdot\nu|_{\Gamma_D}\in L^2(\Gamma_D)$, and recall that $V_k=V_A+u_{D,k}$ on $\Gamma_D$, $\na V_k\cdot\nu=0$ on $\Gamma_N$, where $V_A$ is the stationary potential defined in \eqref{2.VA} and $u_{D,k}=S^T\pi Py_k$. Then, using the Poisson equation, we obtain
\begin{align*}
  \frac{dH_{\alpha,2}^k}{dt} &= \lambda^2\int_\Omega
  \na(V_k-V_A)\cdot\pa_t\na V_k dx \\
  &= -\int_\Omega(V_k-V_A)\pa_t(n_k-p_k-D_k)dx
  + \lambda^2\int_{\pa\Omega}(V_k-V_A)\pa_t\na V_k\cdot\nu d\sigma \\
  &= -\int_\Omega(V_k-V_A)\diver(J_n^k+J_p^k+J_D^k)dx
  + \lambda^2\int_{\Gamma_D}u_D^k\pa_t\na V_k\cdot\nu d\sigma.
\end{align*}
We integrate by parts in the first term on the right-hand side, giving
\begin{align}
  \frac{dH_{\alpha,2}^k}{dt} 
  &= \int_\Omega\na(V_k-V_A)\cdot(J_n^k+J_p^k+J_D^k)dx
  - \int_{\Gamma_D}u_D^k(J_n^k+J_p^k+J_D^k-\lambda^2\pa_t\na V_k)
  \cdot\nu d\sigma \nonumber \\
  &= \int_\Omega\na(V_k-V_A)\cdot(J_n^k+J_p^k+J_D^k)dx
  + u_D^k\cdot I_D^k, \label{3.H2}
\end{align}
since $J_D^k\cdot\nu=0$ on $\pa\Omega$ and where the total current of the device $I_D^k=(I_D^{k,1},I_D^{k,2})$ is given by
\begin{align*}
  I_D^{k,j} = -\int_{\Gamma_D^j}(J_n^k+J_p^k+D_D^k
  -\lambda^2\pa_t\na V_k)\cdot\nu d\sigma.
\end{align*}
We add expressions \eqref{3.H1} and \eqref{3.H2}:
\begin{align}\label{3.H1H2}
  \frac{d}{dt}(H_{\alpha,1}^k+H_{\alpha,2}^k)
  &= -\int_\Omega\Big(J_n^k\cdot\na\big((g_\alpha^k)'(n_k)-V_k\big)
  + J_p^k\cdot\na\big((g_\alpha^k)'(p_k)+V_k\big) \\
  &\phantom{xx}+ J_D^k\cdot\na\big((g_\alpha^k)'(D_k)+V_k\big)\Big)dx
  + \int_\Omega\Big(J_n^k\cdot\na\big((g_\alpha^k)'(\bar{n})-V_A\big) 
  \nonumber \\
  &\phantom{xx}+ J_p^k\cdot\na\big((g_\alpha^k)'(\bar{p})+V_A\big)
  + J_D^k\cdot\na\big((g_\alpha^k)'(\bar{D})+V_A\big)\Big)dx
  + u_D^k\cdot I_D^k. \nonumber 
\end{align}

Taking into account the chain rule \eqref{3.chain}, the gradient $\na(g_\alpha^k)'(n_k)$ can be reformulated, and as in the proof of \cite[Lemma 2.2]{JJZ23}, we end up after an application of Young's inequality with
\begin{align}\label{3.H12}
  \frac{d}{dt}(H_{\alpha,1}^k+H_{\alpha,2}^k)
  &= -\frac12\int_\Omega\big|\na h_\alpha^k(n_k)
  - \sqrt{T_k(n_k)+\alpha}\na V_k\big|^2 dx \\
  &\phantom{xx}- \frac12\int_\Omega\big|\na h_\alpha^k(p_k)
  + \sqrt{T_k(p_k)+\alpha}\na V_k\big|^2 dx \nonumber \\
  &\phantom{xx}- \frac12\int_\Omega\big|\na h_\alpha^k(D_k)
  - \sqrt{T_k(D_k)+\alpha}\na V_k\big|^2 dx \nonumber \\
  &\phantom{xx}+ C\int_\Omega|\na(V_k-V_A)|^2 dx 
  + C\int_\Omega|\na V_A|^2 dx \nonumber \\
  &\phantom{xx}+ C\int_\Omega\big(T_k(n_k)+T_k(p_k)+T_k(D_k)\big)dx
  + u_D^k\cdot I_D^k, \nonumber 
\end{align}
where the constant $C>0$ depends on the $L^\infty(\Omega)$ norms of $\na((g_\alpha^k)'(\bar{n})-V_A)$, $\na((g_\alpha^k)'(\bar{p})+V_A)$, and $\na((g_\alpha^k)'(\bar{D})+V_A)$.

We claim that the last term on the right-hand side of \eqref{3.H12} is canceled from $dH_{\alpha,3}^k/dt$. Indeed, by the symmetry of $E_1$ and $M$ and equation \eqref{3.yk},
\begin{align}\label{3.H3}
  \frac{dH_{\alpha,3}^k}{dt} 
  &= y_k^T(E_1-pi^TSMS^T\pi)\frac{dy_k}{dt} \\
  &= y_k^TE_1PE_1^{-1}\big(A_1y_k + F[J;y_k] + s(t)\big)
  - y_k^T\pi^TSMS^T\pi\frac{dy_k}{dt}. \nonumber 
\end{align}
We rewrite the last term. Because of $S^{T}\pi Q=0$ (see \eqref{2.STpiQ}) and with $y_k=Px_k$,
\begin{align*}
  S^T\pi\frac{dx_k}{dt} = S^T\pi(P+Q)\frac{dx_k}{dt}
  = S^T\pi P\frac{dx_k}{dt} = S^T\pi\frac{dy_k}{dt}.
\end{align*} 
We infer that
\begin{align*}
  u_D^k\cdot I_D^k &= (S^T\pi x_k)^TI_D^k = x_k^T\pi^TSI_D^k
  = x_k^T\pi^TS\bigg(M\frac{du_D^k}{dt} 
  + \mathcal{I}[J_k;u_D^k]\bigg) \\
  &= x_k^T\pi^TSM\frac{d}{dt}(S^T\pi x_k) 
  +  x_k^T\pi^TS \mathcal{I}[J_k;u_D^k] \\
  &= x_k^T\pi^TSMS^T\pi\frac{dy_k}{dt} + y_k^TF[J_k;y_k].
\end{align*}
We replace the last term in \eqref{3.H3} by the previous expression, leading to
\begin{align*}
  \frac{dH_{\alpha,3}^k}{dt} 
  = y_k^TE_1PE_1^{-1}\big(A_1y_k + F[J_k;y_k] + s(t)\big)
  - u_D^k\cdot I_D^k + y_k^TF[J_k;y_k].
\end{align*}
It follows from Lemma \ref{lem.decoupl} that $E_1PE_1^{1}F[J_k;y_k] = E_1(P+Q)E_1^{-1}F[J_k;y_k] = F[J_k;y_k]$, from which we infer that
\begin{align*}
  \frac{dH_{\alpha,3}^k}{dt} 
  = y_k^TE_1PE_1^{-1}(A_1y_k+s(t)) - u_D^k\cdot I_D^k.
\end{align*}

Adding the previous expression to inequality \eqref{3.H1H2}, the term $u_D^k\cdot I_D^k$ cancels, giving
\begin{align*}
  \frac{dH_{\alpha}^k}{dt} 
  &= \frac{d}{dt}(H_{\alpha,1}^k+H_{\alpha,2}^k + H_{\alpha,3}^k)
  = -\frac12\int_\Omega\big|\na h_\alpha^k(n_k)
  - \sqrt{T_k(n_k)+\alpha}\na V_k\big|^2 dx \\
  &\phantom{xx}- \frac12\int_\Omega\big|\na h_\alpha^k(p_k)
  + \sqrt{T_k(p_k)+\alpha}\na V_k\big|^2 dx \nonumber \\
  &\phantom{xx}- \frac12\int_\Omega\big|\na h_\alpha^k(D_k)
  + \sqrt{T_k(D_k)+\alpha}\na V_k\big|^2 dx \nonumber \\
  &\phantom{xx}+ C\int_\Omega|\na(V_k-V_A)|^2 dx 
  + C\int_\Omega|\na V_A|^2 dx \nonumber \\
  &\phantom{xx}+ C\int_\Omega\big(T_k(n_k)+T_k(p_k)+T_k(D_k)\big)dx
  + y_k^TE_1PE_1^{-1}(A_1y_k+s(t)).
\end{align*}
Now, observing that \cite[(2.9)]{JJZ23}
\begin{align*}
  \int_\Omega(T_k(n_k)+T_k(p_k)+T_k(D_k))dx 
  \le CH_\alpha^k[n_k,p_k,D_k,V_k] + C
\end{align*}
and that $ y_k^TE_1PE_1^{-1}(A_1y_k+s(t))\le C|y_k|^2 + C$, we find that 
\begin{align*}
  \frac{dH_{\alpha}^k}{dt} 
  &= -\frac12\int_\Omega\big|\na h_\alpha^k(n_k)
  - \sqrt{T_k(n_k)+\alpha}\na V_k\big|^2 dx \\
  &\phantom{xx}- \frac12\int_\Omega\big|\na h_\alpha^k(p_k)
  + \sqrt{T_k(p_k)+\alpha}\na V_k\big|^2 dx \\
  &\phantom{xx}- \frac12\int_\Omega\big|\na h_\alpha^k(D_k)
  + \sqrt{T_k(D_k)+\alpha}\na V_k\big|^2 dx 
  + CH_\alpha^k + C|y_k|^2 + C.
\end{align*}
Since $E_1-\pi^TSMS^T\pi$ is positive definite with respect to $y_k$, we have $c|y_k|^2\le H_\alpha^k[n_k,p_k,D_k,$ $V_k]$, and an application of Gronwall's lemma finishes the proof.
\end{proof}

The limit $\alpha\to 0$ leads to a free energy inequality with truncation. For this, we introduce the functions
\begin{align*}
  g_k(u) &= \int_0^u\int_1^v\frac{dwdv}{T_k(v)} 
  \quad\mbox{for }u\ge 0, \\
  G_k(u|\bar{u}) &= g^k(u) - g^k(\bar{u}) - (g^k)'(\bar{u})(u-\bar{u})
  \quad\mbox{for }u,\bar{u}\ge 0, \\
  h_k(u) &= \int_0^u\frac{dw}{\sqrt{T_k(w)}}\quad\mbox{for }u\ge 0,
\end{align*}
and the free energy with truncation
\begin{align*}
  H_k[n,p,D,V,y] &= \int_\Omega\big(G_k(n|\bar{n}) 
  + G_k(p|\bar{p}) + G_k(D|\bar{D})\big)dx \\
  &\phantom{xx}+ \frac{\lambda^2}{2}\int_\Omega|\na(V-V_A)|^2 dx
  + \frac12y^T(E_1-\pi^TS^TMS\pi)y.
\end{align*}
The following lemma can be proved as in \cite[Lemma 2.3]{JJZ23}.

\begin{lemma}[Regularized free energy inequality II]\label{lem.regei2}
Let $(n_k,p_k,D_k,V_k,y_k)$ be a solution to the approximate problem \eqref{3.nk}--\eqref{3.yk} with the corresponding initial and boundary conditions. Then there exists a constant $C>0$ independent of $k$ such that
\begin{align}\label{3.eik}
  H_k&[(n,p,D,V,y)(t)]
  + \frac12\int_0^t\int_\Omega\big|\na h_k(n_k)
  - \sqrt{T_k(n_k)}\na V_k\big|^2 dxd\tau \\
  &+ \frac12\int_0^t\int_\Omega\big|\na h_k(p_k)
  + \sqrt{T_k(p_k)}\na V_k\big|^2 dxd\tau \nonumber \\
  &+ \frac12\int_0^t\int_\Omega\big|\na h_k(D_k)
  + \sqrt{T_k(D_k)}\na V_k\big|^2 dxd\tau
  \le C + CH_k[n^0,p^0,D^0,Px^0]. \nonumber 
\end{align}
\end{lemma}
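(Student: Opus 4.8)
The plan is to obtain \eqref{3.eik} by passing to the limit $\alpha\to 0$ in the inequality of Regularized free energy inequality~I, keeping the truncation level $k$ and the (fixed) solution $(n_k,p_k,D_k,V_k,y_k)$ unchanged. The decisive point is that the solution itself does not depend on $\alpha$; only the auxiliary functions $g_\alpha^k$, $h_\alpha^k$ and the functional $H_\alpha^k$ do. Thus the statement reduces to a convergence result for a family of functionals evaluated at \emph{fixed} functions, and the constant $C$ from the preceding lemma is already independent of $\alpha$ and $k$.

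First I would record the pointwise limits of the building blocks. Since $T_k(w)+\alpha\to T_k(w)$ as $\alpha\to 0$, we have $g_\alpha^k(u)\to g_k(u)$, $(g_\alpha^k)'(u)\to (g_k)'(u)$, $h_\alpha^k(u)\to h_k(u)$ and $\sqrt{T_k(u)+\alpha}\to\sqrt{T_k(u)}$ for every $u\ge 0$, whence $G_\alpha^k(u|\bar u)\to G_k(u|\bar u)$. The only delicate region is $\{n_k=0\}$, where the denominators degenerate; this is handled by the standard fact that $\na n_k=0$ a.e.\ on $\{n_k=0\}$, so that
\begin{align*}
  \na h_\alpha^k(n_k) = \frac{\na n_k}{\sqrt{T_k(n_k)+\alpha}}
  \longrightarrow \frac{\na n_k}{\sqrt{T_k(n_k)}} = \na h_k(n_k)
  \quad\text{a.e.\ in }\Omega_T,
\end{align*}
with the right-hand side understood to be zero on $\{n_k=0\}$; the same applies to $p_k$ and $D_k$.

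Next I would treat the two sides separately. On the left-hand side each dissipation integrand $\big|\na h_\alpha^k(n_k)-\sqrt{T_k(n_k)+\alpha}\,\na V_k\big|^2$ converges a.e.\ in $\Omega_T$ to $\big|\na h_k(n_k)-\sqrt{T_k(n_k)}\,\na V_k\big|^2$, using $\na V_k\in L^2(\Omega_T)$ for fixed $k$; since these integrands are nonnegative, Fatou's lemma bounds the $\liminf_{\alpha\to 0}$ of each time-integrated dissipation term from below by its $\alpha=0$ counterpart, and likewise for $p_k$ and $D_k$. For the free energy at time $t$, only $H_{\alpha,1}^k$ depends on $\alpha$; as $G_\alpha^k(\cdot|\cdot)\ge 0$ converges pointwise to $G_k(\cdot|\cdot)$, a further application of Fatou's lemma bounds $H_{\alpha,1}^k[(n_k,p_k,D_k)(t)]$ from below by its limit, while $H_{\alpha,2}^k$ and $H_{\alpha,3}^k$ are independent of $\alpha$. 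On the right-hand side the initial datum is fixed, and the uniform growth bound $0\le G_\alpha^k(u|\bar u)\le C(1+u^2)$ (valid for $\alpha\in(0,1]$ since $T_k$ saturates at $k$) together with $n^0,p^0,D^0\in L^2(\Omega)$ provides an integrable envelope, so dominated convergence yields $H_\alpha^k[n^0,p^0,D^0,Px^0]\to H_k[n^0,p^0,D^0,Px^0]$. Combining the lower-semicontinuity estimates on the left with the convergence on the right gives \eqref{3.eik}.

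The hard part will be controlling the degeneracy at $\{n_k=0\}$: one must ensure that no Bregman mass is lost and that the chain-rule identity $\sqrt{T_k(n_k)}\,\na h_k(n_k)=\na n_k$ survives the limit, which is precisely where $\na n_k=0$ a.e.\ on $\{n_k=0\}$ and the integrability of $1/\sqrt{T_k}$ near the origin enter. Once this is settled, the remaining steps are a routine adaptation of the argument in \cite[Lemma 2.3]{JJZ23}.
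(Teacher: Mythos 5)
Your proposal is correct and follows essentially the same route as the paper, which obtains Lemma \ref{lem.regei2} by letting $\alpha\to 0$ in the first regularized free energy inequality (deferring the details to \cite[Lemma 2.3]{JJZ23}): Fatou's lemma on the nonnegative left-hand side terms, convergence of the $\alpha$-dependent quantities on the right, and the observation that only $H_{\alpha,1}^k$ and the dissipation integrands depend on $\alpha$. Your treatment of the degenerate set $\{n_k=0\}$ via $\na n_k=0$ a.e.\ there and the integrability of $1/\sqrt{T_k}$ near the origin is exactly the point that makes the limit work.
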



\subsection{Uniform bounds}

The regularized free energy inequality \eqref{3.eik} implies the following $k$-independent bounds that can be proved as in \cite[Lemma 2.4]{JJZ23}.

\begin{lemma}[Uniform estimates]
The following bounds hold with a constant $C>0$ that is independent of $k$:
\begin{align*}
  \|g_k(n_k)\|_{L^\infty(0,T;L^1(\Omega))}
  + \|g_k(p_k)\|_{L^\infty(0,T;L^1(\Omega))}
  + \|g_k(D_k)\|_{L^\infty(0,T;L^1(\Omega))} &\le C, \\
  \|n_k\log n_k\|_{L^\infty(0,T;L^1(\Omega))}
  + \|p_k\log p_k\|_{L^\infty(0,T;L^1(\Omega))}
  + \|D_k\log D_k\|_{L^\infty(0,T;L^1(\Omega))} &\le C, \\
  \big\|\sqrt{T_k(n_k)}\na V_k\big\|_{L^\infty(0,T;L^1(\Omega))}
  + \big\|\sqrt{T_k(p_k)}\na V_k\big\|_{L^\infty(0,T;L^1(\Omega))} \\
  \phantom{xx}
  + \big\|\sqrt{T_k(D_k)}\na V_k\big\|_{L^\infty(0,T;L^1(\Omega))}
  &\le C, \\
  \|h_k(n_k)\|_{L^2(0,T;W^{1,1}(\Omega))}
  + \|h_k(p_k)\|_{L^2(0,T;W^{1,1}(\Omega))}
  + \|h_k(D_k)\|_{L^2(0,T;W^{1,1}(\Omega))} &\le C.
\end{align*} 
\end{lemma}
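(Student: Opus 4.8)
The plan is to read off all four groups of bounds directly from the regularized free energy inequality \eqref{3.eik} of Lemma \ref{lem.regei2}, after first checking that its right-hand side is bounded uniformly in $k$. So I would begin by bounding the initial free energy $H_k[n^0,p^0,D^0,Px^0]$ independently of $k$. The three Bregman integrals are controlled because $\bar n$, $\bar p$, $\bar D$ are strictly positive and bounded by Assumption (A4), while $g_k(u)\le u\log u+C$ holds uniformly in $k$ and $n^0,p^0,D^0\in L^2(\Omega)\hookrightarrow L\log L(\Omega)$ on the bounded domain $\Omega$ by Assumption (A3); the electric-energy part is estimated through the elliptic regularity of the Poisson problem in terms of $\|n^0-p^0-D^0+A\|_{L^2(\Omega)}$, and the network term $\tfrac12(Px^0)^T(E_1-\pi^TSMS^T\pi)Px^0$ is a fixed number. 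Since every term on the left-hand side of \eqref{3.eik} is nonnegative---the Bregman distances by convexity of $g_k$, the three dissipation integrals trivially, and the network energy by the positive definiteness of $E_1-\pi^TSMS^T\pi$ recorded after Lemma \ref{lem.E1}---each of them is separately bounded by the constant $C$ for a.e.\ $t\in(0,T)$.

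Next I would extract a uniform $L^\infty(0,T;L^1(\Omega))$ mass bound, which is needed to pass from the Bregman distances to the quantities in the statement. For $D_k$ this is immediate: integrating \eqref{3.Dk} over $\Omega$ and using the no-flux condition \eqref{1.bcD} on all of $\pa\Omega$ gives $\int_\Omega D_k(t)\,dx=\int_\Omega D^0\,dx$. For $n_k$ and $p_k$ the mass is no longer conserved because of the mixed boundary conditions, and I would instead exploit the superlinearity of $g_k$: writing $g_k(n_k)=G_k(n_k|\bar n)+g_k(\bar n)+(g_k)'(\bar n)(n_k-\bar n)$, the terms $g_k(\bar n)$ and $(g_k)'(\bar n)$ are bounded by Assumption (A4), so $\int_\Omega g_k(n_k)\,dx\le\int_\Omega G_k(n_k|\bar n)\,dx+C+L\int_\Omega n_k\,dx$ with $L=\|(g_k)'(\bar n)\|_{L^\infty}$. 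A de la Vallée-Poussin estimate $\int_\Omega n_k\,dx\le C+\lambda^{-1}\int_\Omega g_k(n_k)\,dx$, valid uniformly in $k$ thanks to the $u\log u$-type growth of $g_k$, then closes the loop after choosing $\lambda>L$ to absorb the linear term. This yields $\|g_k(n_k)\|_{L^\infty(0,T;L^1)}\le C$ and the mass bound simultaneously, which is the first asserted estimate; the second follows from the elementary $k$-uniform inequality $|u\log u|\le C(1+g_k(u)+u)$.

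Finally I would treat the remaining two groups. For the drift terms I would use the pointwise bound $\sqrt{T_k(n_k)}|\na V_k|\le\tfrac12 T_k(n_k)+\tfrac12|\na V_k|^2$ together with $\int_\Omega T_k(n_k)\,dx\le\int_\Omega n_k\,dx\le C$ and the electric-energy bound $\int_\Omega|\na V_k|^2\,dx\le C$ (from the previous step plus the fixed $\int_\Omega|\na V_A|^2\,dx$ with $V_A$ defined in \eqref{2.VA}), giving the third estimate in $L^\infty(0,T;L^1(\Omega))$. For the last estimate I would split $\na h_k(n_k)=(\na h_k(n_k)-\sqrt{T_k(n_k)}\na V_k)+\sqrt{T_k(n_k)}\na V_k$ using the chain rule \eqref{3.chain}; the first bracket is bounded in $L^2(\Omega_T)\hookrightarrow L^2(0,T;L^1(\Omega))$ by the dissipation integral, the second in $L^\infty(0,T;L^1)\hookrightarrow L^2(0,T;L^1)$ on the finite interval, and $\|h_k(n_k)\|_{L^1}\le C(1+\|n_k\|_{L^1})$ follows from $h_k(u)\approx 2\sqrt u\le 1+u$, so that the $L^2(0,T;W^{1,1}(\Omega))$ bound results; the same reasoning applies verbatim to $p_k$ and $D_k$. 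The main obstacle is the mass control for $n_k$ and $p_k$: because of the Dirichlet--Neumann mismatch their mass is not conserved, and one must verify the $k$-uniform superlinear growth of $g_k$ and carry out the absorption carefully, since all the other estimates rest on this bound.
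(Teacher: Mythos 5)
Your proposal is correct and follows essentially the same route as the paper, which derives all four bounds directly from the regularized free energy inequality \eqref{3.eik} (deferring the details to \cite[Lemma 2.4]{JJZ23}): nonnegativity of each term on the left-hand side, mass control via the superlinearity of $g_k$, Young's inequality for the drift term, and the splitting of $\na h_k$ into dissipation plus drift. One small imprecision: the pointwise bound $g_k(u)\le u\log u+C$ is \emph{not} uniform in $k$, since $g_k$ grows quadratically like $(u-k)^2/(2k)$ for $u>k$; the initial-energy bound nevertheless survives because this excess contributes at most $\tfrac{1}{2k}\|n^0\|_{L^2(\Omega)}^2$, which is where Assumption (A3) ($n^0,p^0,D^0\in L^2(\Omega)$, not merely $L\log L$) is actually needed.
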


In two space dimensions, we can estimate the drift terms and derive uniform estimates for the densities in the $L^\infty(0,T;L^2(\Omega))$ norm. The key difficulty is the reduction of the cubic drift term with the test function $n_k$ to a quadratic term. This is done by applying elliptic $W^{1,q_0}(\Omega)$ regularity and a logarithmic-type Gagliardo--Nirenberg inequality \cite[(22)]{BHN94}, leading to
\begin{align}\label{3.2d}
  \bigg|\int_\Omega T_k(n_k)\na V_k\cdot\na n_k dx\bigg|
  \le \delta \|\na n_k\|_{L^2(\Omega)}^{1+(3d-2)/(d+2)} + C(\delta),
\end{align}
where $\delta>0$ is arbitrary. The exponent on the right-hand side is not larger than two if and only if $d\le 2$. In this case, the gradient term can be absorbed by the diffusion term. This explains the restriction to two space dimensions. With this idea, we can prove the following result. 

\begin{lemma}[Uniform $L^2(\Omega)$ bounds]\label{lem.L2}
Let $d\le 2$. There exists a constant $C>0$ independent of $k$ such that
\begin{align*}
  \|n_k\|_{L^\infty(0,T;L^2(\Omega))}^2
  &+ \|p_k\|_{L^\infty(0,T;L^2(\Omega))}^2
  + \|D_k\|_{L^\infty(0,T;L^2(\Omega))}^2 \\
  &+ \int_0^T\big(\|\na n_k\|_{L^2(\Omega)}^2 
  + \|\na p_k\|_{L^2(\Omega)}^2 + \|\na D_k\|_{L^2(\Omega)}^2\big)dx
  \le C.
\end{align*}
\end{lemma}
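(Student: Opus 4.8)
The plan is to obtain the $L^2$ bounds by the standard energy method: test each density equation with the density itself (corrected by its Dirichlet datum), so that diffusion produces the dissipation $\|\na n_k\|_{L^2(\Omega)}^2$ and the only dangerous contribution is the drift term, which is controlled uniformly in $k$ by the logarithmic Gagliardo--Nirenberg estimate \eqref{3.2d}. First I would test \eqref{3.nk} with $n_k-\bar n$ and \eqref{3.pk} with $p_k-\bar p$. Since $\bar n,\bar p$ are time-independent, the test functions vanish on $\Gamma_D$ by the Dirichlet condition \eqref{1.dbc} and the boundary integral over $\Gamma_N$ is killed by the no-flux condition \eqref{1.nbc}, giving
\begin{align*}
  \frac{d}{dt}\bigg(\frac12\|n_k\|_{L^2(\Omega)}^2 - \int_\Omega n_k\bar n\,dx\bigg)
  + \|\na n_k\|_{L^2(\Omega)}^2
  &= \int_\Omega T_k(n_k)\na V_k\cdot\na n_k\,dx \\
  &\quad + \int_\Omega\na n_k\cdot\na\bar n\,dx
  - \int_\Omega T_k(n_k)\na V_k\cdot\na\bar n\,dx,
\end{align*}
and the analogous identity for $p_k$. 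Since $D_k$ carries only the no-flux condition \eqref{1.bcD} on the whole boundary, I would test \eqref{3.Dk} with $D_k$ directly, without boundary correction, obtaining
\begin{align*}
  \frac12\frac{d}{dt}\|D_k\|_{L^2(\Omega)}^2 + \|\na D_k\|_{L^2(\Omega)}^2
  = -\int_\Omega T_k(D_k)\na V_k\cdot\na D_k\,dx.
\end{align*}

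The critical drift integral $\int_\Omega T_k(n_k)\na V_k\cdot\na n_k\,dx$ and its counterparts are exactly the terms bounded by \eqref{3.2d}: for $d\le 2$ the exponent $1+(3d-2)/(d+2)$ is at most $2$, so after Young's inequality each drift integral is dominated by $\delta\|\na n_k\|_{L^2(\Omega)}^2 + C(\delta)$ with $\delta$ at our disposal. The cross term $\int_\Omega\na n_k\cdot\na\bar n\,dx$ is absorbed by Young's inequality using $\bar n\in W^{1,\infty}(\Omega)$, while $\int_\Omega T_k(n_k)\na V_k\cdot\na\bar n\,dx \le \|\na\bar n\|_{L^\infty(\Omega)}\|n_k\|_{L^2(\Omega)}\|\na V_k\|_{L^2(\Omega)}$. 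Here I would use that $\|\na V_k\|_{L^2(\Omega)}$ is bounded uniformly in $k$, either from the Poisson estimate \eqref{3.estV} together with the uniform bound on $|y_k|$, or directly from the term $\tfrac{\lambda^2}{2}\|\na(V_k-V_A)\|_{L^2(\Omega)}^2$ controlled by the free energy inequality \eqref{3.eik} of Lemma \ref{lem.regei2}; hence this term is at most $C(1+\|n_k\|_{L^2(\Omega)}^2)$.

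Finally, I would choose $\delta$ small enough that the drift contributions are absorbed into the diffusion terms, sum the three estimates, and use $\tfrac12\|n_k\|_{L^2(\Omega)}^2 - \int_\Omega n_k\bar n\,dx \ge \tfrac14\|n_k\|_{L^2(\Omega)}^2 - C$ to arrive at a differential inequality of the form $\frac{d}{dt}\Phi_k + \tfrac12(\|\na n_k\|_{L^2(\Omega)}^2 + \|\na p_k\|_{L^2(\Omega)}^2 + \|\na D_k\|_{L^2(\Omega)}^2) \le C + C\Phi_k$, where $\Phi_k$ is comparable to $\|n_k\|_{L^2(\Omega)}^2 + \|p_k\|_{L^2(\Omega)}^2 + \|D_k\|_{L^2(\Omega)}^2$. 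Gronwall's lemma then yields the uniform $L^\infty(0,T;L^2(\Omega))$ bound on the densities, and one further time integration yields the $L^2(\Omega_T)$ bound on the gradients. The main obstacle is genuinely the drift term and the reason for the restriction $d\le 2$; since \eqref{3.2d} is already available, the remaining work is bookkeeping: ensuring the Dirichlet boundary integrals cancel for $n_k$ and $p_k$ while respecting that $D_k$ carries only a no-flux condition, and organizing the $\delta$-absorption and the lower-order terms so that the Gronwall estimate closes uniformly in $k$.
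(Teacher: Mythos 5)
Your proposal is correct and follows essentially the same route as the paper: the paper's proof of Lemma \ref{lem.L2} simply invokes the energy-method argument of \cite[Lemma 3.1]{JJZ23} (test with the densities corrected by their Dirichlet data, control the drift terms via the logarithmic Gagliardo--Nirenberg bound \eqref{3.2d}, absorb and apply Gronwall), noting only that the $y$-dependence of $\bar V$ is harmless because $|y_k|$ is uniformly bounded by the free energy inequality. You spell out exactly this argument, including the correct treatment of the mismatched boundary conditions for $D_k$ and the uniform control of $\|\na V_k\|_{L^2(\Omega)}$ via the free energy and the $y_k$ bound, so no further comment is needed.
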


\begin{proof}
The lemma can be proved as \cite[Lemma 3.1]{JJZ23}. The only difference is the dependence of the boundary data $\bar{V}$ on $y(t)$. However, the free energy inequality of Lemma \ref{lem.regei2} provides a $k$-uniform $L^\infty(0,T)$ bound for $y_k$ such that $\bar{V}$ is bounded too. Thus, we can proceed as in the proof of \cite[Lemma 3.1]{JJZ23}.
\end{proof}

\begin{lemma}[Uniform bound for the displacement current]
There exists a constant $C>0$ independent of $k$ such that
\begin{align*}
  \|J_n^k\|_{L^2(\Omega_T)} + \|J_p^k\|_{L^2(\Omega_T)}
  + \|J_D^k\|_{L^2(\Omega_T)} &\le C, \\
  \bigg\|\frac{dy_k}{dt}\bigg\|_{L^2(0,T)}
  + \|F[J_k;y_k]\|_{L^2(0,T)} &\le C.
\end{align*}
\end{lemma}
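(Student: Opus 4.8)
The plan is to reduce all three bounds to a single uniform estimate on the total current density $J_k=J_n^k+J_p^k+J_D^k$ in $L^2(\Omega_T)$, and then to read off the network bounds from the decoupled ODE \eqref{3.yk}. The starting ingredients are the $k$-independent estimates already at hand: Lemma~\ref{lem.L2} controls $n_k,p_k,D_k$ in $L^\infty(0,T;L^2(\Omega))\cap L^2(0,T;H^1(\Omega))$, and the free energy inequality of Lemma~\ref{lem.regei2}, via the positive definiteness of $E_1-\pi^TSMS^T\pi$, yields a uniform $L^\infty(0,T)$ bound for $y_k$. First I would upgrade these to a uniform bound for the potential gradient. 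Writing $V_k$ in the decomposed form \eqref{2.V}, namely $V_k=V_A+w_1u_{D,k}^1+w_2u_{D,k}^2+\mathcal{L}[n_k-p_k-D_k]$ with $u_{D,k}^j=(S^T\pi Py_k)_j$, the functions $V_A,w_1,w_2$ are fixed and independent of $k$, the coefficients $u_{D,k}^j$ are uniformly bounded in $L^\infty(0,T)$, and the elliptic $H^2$ regularity of Assumption~(A7) applied pointwise in time to $\mathcal{L}$ gives $\|\mathcal{L}[n_k-p_k-D_k](t)\|_{H^2(\Omega)}\le C\|(n_k-p_k-D_k)(t)\|_{L^2(\Omega)}$. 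Hence $V_k$ is bounded in $L^\infty(0,T;H^2(\Omega))$ uniformly in $k$, and the two-dimensional embedding $H^1(\Omega)\hookrightarrow L^4(\Omega)$ produces a uniform bound for $\na V_k$ in $L^\infty(0,T;L^4(\Omega))$.

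Next I would control the drift terms. Using $T_k(n_k)\le n_k$ together with H\"older's inequality and the previous step,
\begin{align*}
  \|T_k(n_k)\na V_k\|_{L^2(\Omega_T)}^2
  \le \int_0^T\|n_k\|_{L^4(\Omega)}^2\|\na V_k\|_{L^4(\Omega)}^2\,dt
  \le \|\na V_k\|_{L^\infty(0,T;L^4(\Omega))}^2\int_0^T\|n_k\|_{L^4(\Omega)}^2\,dt.
\end{align*}
Since $H^1(\Omega)\hookrightarrow L^4(\Omega)$ in two dimensions, the last integral is controlled by $\|n_k\|_{L^2(0,T;H^1(\Omega))}^2$, which is uniformly bounded by Lemma~\ref{lem.L2}; the analogous bounds hold for $p_k$ and $D_k$. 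As $\na n_k,\na p_k,\na D_k$ are also uniformly bounded in $L^2(\Omega_T)$, the definition $J_n^k=\na n_k-T_k(n_k)\na V_k$, and similarly for $J_p^k,J_D^k$, yields the claimed uniform $L^2(\Omega_T)$ bounds for the current densities, hence for $J_k$.

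With $J_k$ under control, the network bounds follow quickly. Recalling $F[J_k;y_k]=-\pi^TS\mathcal{I}[J_k;S^T\pi Py_k]$ together with estimate \eqref{2.Ij}, I obtain $\|F[J_k;y_k]\|_{L^2(0,T)}^2\le C\|\mathcal{I}[J_k;\cdot]\|_{L^2(0,T)}^2\le C\|J_k\|_{L^2(\Omega_T)}^2\le C$, uniformly in $k$. Finally, reading off the differential equation \eqref{3.yk}, namely $dy_k/dt=PE_1^{-1}(A_1y_k+F[J_k;y_k]+s(t))$, and invoking the triangle inequality with the uniform $L^\infty(0,T)$ bound for $y_k$, the just-obtained bound for $F[J_k;y_k]$, and the boundedness of $s=(A_Ii_I,0,v_V)$ guaranteed by Assumption~(A2), I conclude the uniform $L^2(0,T)$ bound for $dy_k/dt$.

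The main obstacle is the second step, i.e.\ estimating the quadratic drift term $T_k(n_k)\na V_k$ in $L^2(\Omega_T)$ uniformly in $k$. This is precisely where the restriction to two space dimensions enters: one needs $\na V_k$ in $L^4$ rather than merely in $L^2$, which rests on the $H^2$ elliptic regularity of Assumption~(A7) and the embedding $H^1\hookrightarrow L^4$, the latter failing in higher dimensions. If only the weaker $W^{1,q_0}(\Omega)$ regularity with $q_0>2$ from \cite{Gro89} were available, the same scheme would still close by pairing $\na V_k\in L^{q_0}$ with a high but finite Lebesgue norm of $n_k$, which the two-dimensional embedding $H^1\hookrightarrow L^q$ ($q<\infty$) supplies. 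Care must also be taken that the elliptic estimate for $\mathcal{L}$ is applied pointwise in time, so that the bound on $V_k$ is genuinely in $L^\infty(0,T;H^2(\Omega))$ and not only in $L^2(0,T;H^2(\Omega))$; otherwise the time integration in the H\"older step above would not close.
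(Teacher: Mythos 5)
Your reduction of the network bounds to a uniform $L^2(\Omega_T)$ bound on the total current $J_k$, and the subsequent estimates for $F[J_k;y_k]$ via \eqref{2.Ij} and for $dy_k/dt$ via \eqref{3.yk}, coincide with the paper's argument. The gap is in your primary route to the drift-term bound: the claim that $V_k$ is uniformly bounded in $L^\infty(0,T;H^2(\Omega))$, hence $\na V_k\in L^\infty(0,T;L^4(\Omega))$, is not justified by the assumptions and is false in general. Assumption (A7) provides $H^2(\Omega)$ regularity only for the operator $\mathcal{L}$, i.e.\ for homogeneous Dirichlet data on all of $\Gamma_D$; it says nothing about the harmonic lifts $w_1,w_2$, whose Dirichlet data jump between the two terminals and which therefore carry the generic $r^{1/2}$ singularity at the junctions of $\Gamma_D$ and $\Gamma_N$, so that $\na w_j\in L^q(\Omega)$ can only be expected for $q<4$. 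The paper's own discussion of Assumption (A7) invokes Shamir's counterexample precisely to stress that $W^{1,q_0}(\Omega)$ regularity with $q_0\ge 4$ ``cannot be expected, even if the domain and the data are smooth''. Since $u_{D,k}^1\ne u_{D,k}^2$ whenever a bias is applied, the same obstruction affects $V_k$ itself, and your H\"older step $\|T_k(n_k)\na V_k\|_{L^2(\Omega)}\le\|n_k\|_{L^4(\Omega)}\|\na V_k\|_{L^4(\Omega)}$ cannot be closed.

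Your fallback remark is, however, essentially the paper's proof and does close the argument: by Gr\"oger's elliptic regularity one has $\|V_k(t)\|_{W^{1,q_0}(\Omega)}\le C$ uniformly in $t$ and $k$ for some $q_0>2$ (using the $L^\infty(0,T;L^2(\Omega))$ bounds on the densities from Lemma \ref{lem.L2} and the $L^\infty(0,T)$ bound on $y_k$ from the free energy inequality), and then H\"older with $1/\rho+1/q_0=1/2$, i.e.\ $\rho=2q_0/(q_0-2)<\infty$, combined with the two-dimensional embedding $H^1(\Omega)\hookrightarrow L^\rho(\Omega)$ and the $L^2(0,T;H^1(\Omega))$ bound, yields
\begin{align*}
  \int_0^T\|T_k(n_k)\na V_k\|_{L^2(\Omega)}^2\,dt
  \le \|\na V_k\|_{L^\infty(0,T;L^{q_0}(\Omega))}^2
  \int_0^T\|n_k\|_{L^\rho(\Omega)}^2\,dt \le C.
\end{align*}
(The paper additionally interpolates $\|n_k\|_{L^\rho(\Omega)}$ between $H^1(\Omega)$ and $L^1(\Omega)$, but this refinement is not needed for the present lemma.) You should promote this fallback to the main argument and discard the $H^2$ route.
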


\begin{proof}
By elliptic regularity \cite{Gro89}, we have
\begin{align*}
  \|V_k\|_{W^{1,q_0}(\Omega)} \le C\big(\|n_k\|_{L^r(\Omega)}
  + \|p_k\|_{L^r(\Omega)} + \|D_k\|_{L^r(\Omega)} + |y_k(t)|\big)
  \le C,
\end{align*}
where $r=2q_0/(2+q_0)\in(1,2)$, and we have used the uniform bounds in Lemma \ref{lem.L2}. Then, by the H\"older inequality with $1/\rho+1/q_0=1/2$ (or $\rho=2q_0/(q_0-2)>2$) and the classical Gagliardo--Nirenberg inequality,
\begin{align*}
  \int_0^T\|J_n^k\|_{L^2(\Omega)}^2 dt
  &\le \int_\Omega\big(\|\na n_k\|_{L^2(\Omega)}^2
  + \|n_k\|_{L^\rho(\Omega)}^2\|\na V_k\|_{L^{q_0}(\Omega)}^2\big)dt \\
  &\le C\int_0^T\big(\|\na n_k\|_{L^2(\Omega)}^2 
  + \|n_k\|_{H^1(\Omega)}^{2-2/\rho}\|n_k\|_{L^1(\Omega)}^{2+2/\rho}
  \|\na V_k\|_{L^{q_0}(\Omega)}^2\big)dt \\
  &\le C + C\int_0^T\|n_k\|_{H^1(\Omega)}^{2-2/\rho}dt \le C,
\end{align*}
where the last step follows from the uniform bounds for $n_k$ in $L^2(0,T;H^1(\Omega))$ and $L^\infty(0,T;$ $L^1(\Omega))$ as well as the $L^2(0,T;L^{p_0}(\Omega))$ bound for $\na V_k$. Similar estimates hold for $J_p^k$ and $J_D^k$. 

Next, taking into account definition \eqref{2.sF} of $F[J_k;y_k]$ and inequality \eqref{2.Ij},
\begin{align*}
  \|F[J_k;y_k]\|_{L^2(0,T)}^2
  &\le C\int_0^T|\mathcal{I}[J_k;S^T\pi y_k]|^2 dt \\
  &\le C\int_0^T\big(\|J_n^k\|_{L^2(\Omega_T)}^2 
  + \|J_p^k\|_{L^2(\Omega_T)}^2 + \|J_D^k\|_{L^2(\Omega_T)}^2\big)dt
  \le C.
\end{align*}
It follows from $\|s\|_{L^\infty(0,T)}\le C$ (see Assumption (A2)), $\|y_k\|_{L^\infty(0,T)}\le C$ (by the free energy inequality), and Lemma \ref{lem.decoupl} that
\begin{align*}
  \bigg\|\frac{dy_k}{dt}\bigg\|_{L^2(0,T)}^2
  &= \int_0^T\big|PE_1^{-1}(A_1y_k+F[J_k;y_k]+s(t))\big|^2 dt \\
  &\le C\big(\|y_k\|_{L^2(0,T)}^2 + \|F[J_k;y_k]\|_{L^2(0,T)}^2
  + \|s\|_{L^2(0,T)}^2\big)dt \le C.
\end{align*}
This finishes the proof.
\end{proof}


\subsection{Limit $k\to\infty$}

The limit $k\to\infty$ is simpler than in \cite{JJZ23} since we have global uniform bounds. In fact, since $(J_n^k)$ is bounded in $L^2(\Omega_T)$, $(\pa_t n_k)$ is bounded in $L^2(0,T;H^{-1}_D(\Omega))$. This, together with the uniform $L^2(0,T;H^1(\Omega))$ bound for $(n_k)$, allows us to apply the Aubin--Lions lemma to conclude that, up to a subsequence, as $k\to\infty$,
\begin{align*}
  n_k\to n,\quad p_k\to p, \quad D_k\to D
  \quad\mbox{strongly in }L^2(\Omega_T). 
\end{align*}
This implies that $T_k(n_k) - n = (T_k(n_k)-n_k)+(n_k-n)\to 0$ strongly in $L^1(\Omega_T)$, since
\begin{align*}
  \int_0^T\int_\Omega&|T_k(n_k)-n_k|dxdt
  = \int_0^T\int_{\{n_k>k\}}(n_k-k)dxdt \\
  &\le \int_0^T\int_{\{n_k>k\}}n_k\frac{\log n_k}{\log k}dxdt
  \le \frac{1}{\log k}\|n_k\log n_k\|_{L^\infty(0,T;L^1(\Omega))}\to 0.
\end{align*}
By weak compactness, we also have
\begin{align*}
  \na n_k\rightharpoonup \na n &\quad\mbox{weakly in }L^2(\Omega_T), \\
  \pa_t n_k\rightharpoonup \pa_t n &\quad\mbox{weakly in }
  L^2(0,T;H^{-1}_D(\Omega)).
\end{align*}

Our goal is to show that $J_n^k\rightharpoonup J_n=\na n-n\na V$ weakly in $L^2(\Omega_T)$. For this, we observe that $h_k(u)=2\sqrt{u}$ for $u\le k$ and $h_k(u)=u/\sqrt{k}+\sqrt{k}$ else, which which we deduce that
\begin{align*}
  \sup_{0<t<T}&\int_\Omega|h_k(n_k)-2\sqrt{n_k}|^2 dx
  = \sup_{0<t<T}\int_{\{n_k>k\}}\bigg|\frac{n_k}{\sqrt{k}}
  + \sqrt{k} - 2\sqrt{n_k}\bigg|^2 dxdt \\
  &= \frac{1}{k}\sup_{0<t<T}\int_{\{n_k>k\}}(\sqrt{n_k}-\sqrt{k})^4
  dxdt \le \frac{C}{k}\to 0,
\end{align*}
where the constant $C>0$ depends on the $L^\infty(0,T;L^2(\Omega))$ norm of $n_k$. It follows from the triangle inequality that $h_k(n_k)\to 2\sqrt{n}$ strongly in $L^2(\Omega_T)$. We have even convergence of the gradients:
\begin{align*}
  \|\na(h_k(n_k)-2\sqrt{n})\|_{L^2(0,T;H^1(\Omega)')}
  \le \|h_k(n_k)-2\sqrt{n}\|_{L^2(\Omega_T)}\to 0.
\end{align*}

Concerning the electric potential, we know that $\na V_k\rightharpoonup^*\na V$ weakly* in $L^\infty(0,T;L^2(\Omega))$. Then we infer from $\sqrt{T_k(n_k)}\to \sqrt{n}$ strongly in $L^2(\Omega_T)$ that $\sqrt{T_k(n_k)}\na V_k\rightharpoonup \sqrt{n}\na V$ weakly in $L^2(0,T;L^1(\Omega))$. Consequently,
\begin{align*}
  \na h_k(n_k)-\sqrt{T_k(n_k)}\na V_k
  \rightharpoonup 2\na\sqrt{n} - \sqrt{n}\na V
  \quad\mbox{weakly in }L^2(0,T;H^1(\Omega)').
\end{align*}
On the other hand, the free energy inequality shows that $\xi_k:=\na h_k(n_k)-\sqrt{T_k(n_k)}\na V_k$ is uniformly bounded in $L^2(\Omega_T)$ such that, up to a subsequence, $\xi_k\rightharpoonup \xi$ weakly in $L^2(\Omega_T)$. This shows that $\xi=2\na\sqrt{n} - \sqrt{n}\na V$. 

The convergence $\sqrt{T_k(n_k)}\to \sqrt{n}$ strongly in $L^2(\Omega_T)$  and the chain rule $\sqrt{T_k(n_k)}\na h_k(n_k)$ $=\na n_k$ imply that
\begin{align*}
  J_n^k &= \sqrt{T_k(n_k)}\big(\na h_k(n_k)-\sqrt{T_k(n_k)}\na V_k\big)
  + \na n_k - \sqrt{T_k(n_k)}\na h_k(n_k) \\
  &\to \sqrt{n}\big(2\na\sqrt{n}-\sqrt{n}\na V\big) 
  = \na n-n\na V =: J_n \quad\mbox{weakly in }L^1(\Omega_T),
\end{align*}
and because of the uniform $L^2(\Omega_T)$ bound for $J_n^k$, this weak convergence  also holds in $L^2(\Omega_T)$. Thus, passing to the limit $k\to\infty$ in the weak formulation
\begin{align*}
  \int_0^T\langle \pa_t n_k,\phi\rangle dt
  = \int_0^T\int_\Omega(\na n_k-T_k(n_k)\na V_k)\cdot\na\phi dxdt
\end{align*}
for $\phi\in L^2(0,T;H^1_D(\Omega))$, we find that
\begin{align*}
  \int_0^T\langle\pa_t n,\phi\rangle dt
  = \int_0^T\int_\Omega(\na n-n\na V)\cdot\na\phi dxdt.
\end{align*}
The convergences for the other charge carriers are proved in an analogous way.

The $H^1(0,T)$ bound for $(y_k)$ and the compactness of the embedding $H^1(\Omega)\hookrightarrow L^2(0,T)$ imply the existence of a subsequence (not relabeled) such that
\begin{align*}
  y_k\to y\quad\mbox{strongly in }L^2(0,T), \quad
  \frac{dy_k}{dt}\rightharpoonup \frac{dy}{dt}\quad
  \mbox{weakly in }L^2(0,T).
\end{align*}
Recall, by definitions \eqref{2.IDj} and \eqref{2.sF}, that
\begin{align*}
  F[J_k;y_k]_j = -\pi^TS\mathcal{I}_j[J_k;S^T\pi y_k]
  = -\pi^TS\int_\Omega\na w_j\cdot
  (J_k-\lambda^2\mathcal{L}[\diver J_k])dx.
\end{align*}
It follows from $J_k=J_n^k+J_p^k+J_D^k\rightharpoonup J:=J_n+J_p+J_D$ weakly in $L^2(\Omega_T)$ and the boundedness of the linear operator $\mathcal{L}$ that $\na\mathcal{L}[\diver J_k]\rightharpoonup\na\mathcal{L}[\diver J]$ weakly in $L^2(\Omega_T)$. We conclude that
\begin{align*}
  F[J_k;y_k] \rightharpoonup F[J,y]\quad\mbox{weakly in }L^2(0,T).
\end{align*}
Thus, passing to the limit $k\to \infty$ in
\begin{align*}
  \frac{dy_k}{dt} = PE_1^{-1}(A_1y_k+s(t)) + F[J_k;y_k],
\end{align*}
we see that $y$ solves $dy/dt=PE_1^{-1}(A_1y+s(t))+F[J;y]$. 

Since $(n_k)$ is bounded in $L^2(\Omega_T)$, the elliptic regularity implies a uniform bound for $V_k$ in $L^2(0,T;H^2(\Omega))$, showing that the limit $V$ is an element of $L^2(0,T;H^2(\Omega))$. 

Finally, the free energy inequality follows after passing to the limit $k\to\infty$ in \eqref{3.eik} using the weak convergences and the lower weakly semicontinuity of the norms. This ends the proof.

\begin{remark}[Existence proof in \cite{AlRo21}]\label{rem.mistake}\rm
A coupled two-species semiconductor device--electric network model was studied in \cite{AlRo21}. The local ex\-istence proof is based on the Banach fixed-point theorem. With the fixed-point operator $S$, one needs to estimate $S(n^1,p^1,y^1) - S(n^2,p^2,y^2)$, where $(n^j,p^j)\in (\bar{n},\bar{p})+L^2(0,T;H^1_D(\Omega;\R^2))$. In particular, we have to estimate $\mathcal{L}[\diver(J_n^1-J_n^2)]$, where $J^j=\na n^j-n^j\na V^j$. It is argued in \cite[p.~6129]{AlRo21} that $\mathcal{L}[\Delta(n^1-n^2)] = -\lambda^{-2}(n^1-n^2)$, since $\na(n^1-n^2)\cdot\nu=0$ on $\Gamma_N$ (in a generalized sense). Unfortunately, this boundary condition is generally not satisfied. Our proof avoids this argument, since we derive an estimate for fixed points, in particular for $\mathcal{L}[\diver J_n]$, and $J_n$ satisfies the corresponding boundary conditions by construction. 
\qed\end{remark}


\section{Proof of Theorem \ref{thm.bd}}\label{sec.bd} 

The upper bound is proved in \cite[Theorem 1.2]{JJZ23} by using an Alikakos-type iteration argument. The idea is to derive $L^q(\Omega)$ bounds uniform in $q\in\N$ and to pass to the limit $q\to\infty$. The main difficulty is the estimation of the drift terms. This is done by applying, similar as in the existence proof for the approximate problem, the logarithmic-type Gagliardo--Nirenberg inequality \cite[(22)]{BHN94}, which restricts the proof to two space dimensions. It remains to show the positive lower bounds. To this end, we use the test function $(n-m)^-=\min\{0,n-m\}$ in the weak formulation of \eqref{1.dd1}, where $m=m_0\exp(-\mu t)$ and $m_0=\min\{\inf_\Omega n^0,\inf_{\Gamma_D}\bar{n}\}>0$:
\begin{align*}
  \frac12\frac{d}{dt}&\int_\Omega(n-m)^-(t)^2 dx
  + \int_\Omega|\na(n-m)^-|^2 dx - \int_\Omega \pa_t m(n-m)^- dx \\
  &= \int_\Omega(n-m)\na V\cdot\na(n-m)^- dx
  + m\int_\Omega\na V\cdot\na(n-m)^- dx \\
  &= \frac12\int_\Omega\na V\cdot\na[(n-m)^-]^2 dx
  + m\int_\Omega\na V\cdot\na(n-m)^- dx \\
  &= -\frac{1}{2\lambda^2}\int_\Omega(n-p-D+A)[(n-m)^-]^2 dx
  - \frac{1}{\lambda^2}\int_\Omega(n-p-D+A)(n-m)^- dx \\
  &\le \frac{M}{\lambda^2}\int_\Omega[(n-m)^-]^2 dx
  + \frac{1}{\lambda^2}(M+\|A\|_{L^\infty(\Omega)})
  \int_\Omega(n-m)^- dx,
\end{align*}
where we used the Poisson equation \eqref{1.V} and the upper bound $n,p,D\le M=M(T)$. Since $\pa_t m = -\mu m$, the last integral on the right-hand side can be absorbed by the left-hand side after choosing $\mu\ge 2\lambda^{-2}(M+\|A\|_{L^\infty(\Omega)})$. We conclude from Gronwall's lemma that $(n-m)^-(t)=0$ and hence $n(t)\ge m$ in $\Omega$, $t\le T$. The choice $c=m_0\exp(-\mu T)>0$ finishes the proof.


\section{Conclusions}\label{sec.con}

We have presented in this paper a coupled semiconductor device and electric network model, consisting of three-species drift-diffusion equations for the densities of the charged carriers and the Poisson equation for the electric potential as well as differential-algebraic equations for the node potentials, currents through inductors, and currents through voltage sources. We have proved the existence of global weak solutions, the free energy inequality, and the strict positivity of the charge carriers under suitable assumptions.

Our results can be extended in different directions. Our proofs work as well for an arbitrary number of species with a proper specification of the boundary conditions. The right-hand side of the free energy inequality in Theorem \ref{thm.ex} depends on the end time $T$. This can be improved as in \cite[Lemma 2.3]{JJZ23} if the boundary data are in thermal equilibrium (zero applied bias). The proof of the boundedness and strict positivity of the densities uniform in time is more delicate, since this clearly requires conditions on the boundary data. If the densities are uniformly bounded from below, we obtain an $L^2(\Omega)$ bound for the fluxes uniform in time. Another direction is the long-time asymptotics. It is shown in \cite[Theorem 3]{AlRo21} that the solution to the coupled system converges to the thermal equilibrium state as $t\to\infty$ under some conditions. A possible extension could be the proof of an exponential convergence rate. Finally, since the drift-diffusion model shows some features of memristor devices like pinched hysteresis loops \cite[Sec.~6]{JJZ23}, the coupled device--circuit system may used in neuromorphic computing to describe the dynamics in artificial neural networks.


\end{document}